\definecolor{Chocolat}{rgb}{0.36, 0.2, 0.09}
\definecolor{BleuTresFonce}{rgb}{0.215, 0.215, 0.36}
\definecolor{EgyptianBlue}{rgb}{0.06, 0.2, 0.65}
\newtheorem{theorem}{Theorem}[section]
\newtheorem{corollary}[theorem]{Corollary}
\newtheorem{proposition}[theorem]{Proposition}
\theoremstyle{definition}
\newtheorem{remark}[theorem]{Remark}
\DeclareMathAlphabet{\pazocal}{OMS}{zplm}{m}{n}
\def\calA{\pazocal{A}}
\def\calF{\pazocal{F}}
\def\calL{\pazocal{L}}
\def\calM{\pazocal{M}}
\def\calO{\pazocal{O}}
\def\calR{\pazocal{R}}
\def\calS{\pazocal{S}}
\def\calT{\pazocal{T}}
\def\calU{\pazocal{U}}
\def\calV{\pazocal{V}}
\def\m{{\ensuremath{\mathsf{m}}}}
\def\o{{\ensuremath{\mathsf{o}}}}
\def\w{{\ensuremath{\mathsf{w}}}}
\DeclareMathOperator{\RT}{RT}
\DeclareMathOperator{\RTM}{RTM}
\DeclareMathOperator{\RTW}{RTW}
\DeclareMathOperator{\PL}{PL}
\DeclareMathOperator{\PLM}{PLM}
\DeclareMathOperator{\PLMC}{PLMC}
\DeclareMathOperator{\Graphs}{Graphs}
\DeclareMathOperator{\CGraphs}{CGraphs}
\def\ract{\mathbin{\blacktriangleleft}}
\DeclareMathOperator{\Lie}{Lie}
\DeclareMathOperator{\Com}{Com}
\DeclareMathOperator{\Perm}{Perm}
\DeclareMathOperator{\id}{id}
\DeclareMathOperator{\Div}{div}
\DeclareMathOperator{\Cyc}{Cyc}
\DeclareMathOperator{\Tw}{Tw}
\DeclareMathOperator{\Vect}{{\ensuremath\mathsf{Vect}}}
\def\d{\mathrm{d}}
\DeclareMathAlphabet{\mathbbold}{U}{bbold}{m}{n}
\def\k{\mathbbold{k}}
\begin{document}

\title{Volume preservation of Butcher series methods from the operad viewpoint}

\author{Vladimir Dotsenko}
\address{ 
Institut de Recherche Math\'ematique Avanc\'ee, UMR 7501, Universit\'e de Strasbourg et CNRS, 7 rue Ren\'e-Descartes, 67000 Strasbourg, France}

\email{vdotsenko@unistra.fr}

\author{Paul Laubie}
\address{Institut Élie Cartan de Lorraine, UMR 7502, Faculté des Sciences et Technologies, Boulevard des Aiguillettes, 54506 Vandœuvre-lès-Nancy, France}
\email{paul.laubie@univ-lorraine.fr}

\begin{abstract}
We study a coloured operad involving rooted trees and directed cycles of rooted trees that generalizes the operad of rooted trees of Chapoton and Livernet. We describe all the relations between the generators of a certain suboperad of that operad, and compute the Chevalley--Eilenberg homology of two naturally arising differential graded Lie algebras. This allows us to give short and conceptual new proofs of two important results on Butcher series methods of numerical solution of ODEs: absence of volume-preserving integration schemes and the acyclicity of the aromatic bicomplex, the key step in a complete classification of volume-preserving integration schemes using the so called aromatic Butcher series. 
\end{abstract}

\maketitle

\section{Introduction}

In numerical methods of solving ordinary differential equations, Butcher series methods introduced in \cite{MR305608,MR403225} play a prominent role. They are based on series whose terms are indexed by rooted trees; this corresponds to the fact known already to Cayley~\cite{Cayley1857} that rooted trees are a convenient book-keeping device for organising computations with iterated applications of a vector field to a function. If one considers source-free ODEs, the question of volume preservation~\cite{MR2334044} becomes natural. A general theorem proved independently by Chartier and Murua \cite{MR2317009} and by Iserles, Quispel, and Tse \cite{MR2334044} asserts that a nontrivial Butcher series method cannot be volume-preserving. By the very nature of the question of volume preservation, one wishes to keep track of divergences, and the so called aromatic Butcher series \cite{MR3451427} become relevant. Combinatorially, this amounts to considering rooted trees with coefficients that are (disjoint unions of) directed cycles of rooted trees; the latter are called aromas in the literature, hence the terminology. In fact, if one uses aromatic Butcher series, volume-preserving methods exist, and the acyclicity theorem for the so called aromatic bicomplex proved by Laurent, McLachlan, Munthe-Kaas, and Verdier \cite{MR4624837} offers a full classification of volume-preserving aromatic Butcher series methods. 

Our work offers algebraic and categorical insight into matters of volume preservation. Recall that Chapoton and Livernet \cite{MR1827084} equipped the species of labelled rooted trees $\RT$ with an operad structure and proved that the rooted tree operad is isomorphic to the operad of pre-Lie algebras, known also as right-symmetric algebras. In this paper, we study a two-coloured operad that we denote $\RTW$; it includes both rooted trees and directed cycles of rooted trees and is therefore suitable for working with aromatic Butcher series. By contrast to the operad of rooted trees, the coloured operad $\RTW$ is not finitely generated. However, we managed to describe by generators and relations its suboperad generated by the main operations of interest. This description can be used to give a conceptual (and very short) proof of the abovementioned theorem on absence of volume-preserving Butcher series methods. Moreover, we relate the aromatic bicomplex and its divergence-free version to the Chevalley--Eilenberg complexes of certain differential graded Lie algebras, and use that relationship to obtain conceptual proofs of the acyclicity theorems. In a way, one can think of our approach as follows. The apparatus of Butcher series is designed to capture universal phenomena that do not use specific features of a concrete problem (dimension, extra symmetries etc.); we take that universality to the next level and consider an object (an operad) that additionally makes it possible to break a vector field in an unspecified number of pieces that may play different roles in the integration scheme. 

\subsection{Structure of the paper}In Section \ref{sec:recollection}, we give a short recollection of topics related to volume preservation, namely, Butcher series methods, their aromatic version, and the aromatic bicomplex. In Section \ref{sec:colouredops}, we define the coloured operad $\RTW$ that is used throughout the paper. In Section \ref{sec:embedding}, we study the suboperad of $\RTW$ generated by its fundamental operations, and identify the complete set of relations between the generators. In Section \ref{sec:divergence}, we define the reduced divergence map, use our results to describe its kernel and apply that description to prove that a nontrivial Butcher series method cannot be volume-preserving. In Section \ref{sec:dgla}, we define two differential graded Lie algebras, compute their Chevalley--Eilenberg homology, and apply the thus obtained results to the aromatic bicomplex. Finally, in Appendix~\ref{app:conseqdiv0} we use the kernel of the reduced divergence map to give a new characterization of Lie elements in free pre-Lie algebras, and discuss a connection to operadic twisting, in Appendix~\ref{app:graphcx}, we prove two results on undirected graph complexes that are counterparts of our main theorems, and in Appendix~\ref{app:colouredGB} we explain how one can study one of the main operads considered here using Gr\"obner bases for coloured operads. 

\subsection{Conventions} We represent both rooted trees and directed cycles of rooted trees as directed graphs: for each rooted tree, all edges are directed towards the root. All vector spaces and chain complexes in this paper are defined over a ground field $\k$ of zero characteristic. 
For a systematic introduction to Butcher series, we refer the reader to \cite[Chapter~III]{MR1904823} and \cite[Chapter~II]{MR868663}; we also recommend the beautiful survey paper \cite{MR3751443} for some historical context and further references. We freely use terminology of species of structures \cite{MR1629341,MR633783}. For information on operads, the reader is referred to the monograph \cite{MR2954392}. For coloured operads, the reader is referred to the papers~\cite{MR2342815,MR4375008}. We only use two-coloured operads, and we view their components as coloured species: for a two-coloured operad~$\calO$, we denote by $\calO(I,J;c)$ the component with the output colour $c$, the set~$I$ of inputs of the first colour and the set~$J$ of inputs of the second colour. Chain complexes are always homologically graded (so that differential is of homological degree $-1$); to handle suspensions, we use the formal symbol $s$ of degree $1$ so that 
 \[
sC_\bullet=\k s\otimes C_\bullet=C_{\bullet-1}.
 \]

\section{Recollections on volume preservation}\label{sec:recollection}

Consider an autonomous differential equation 
 \[
y'(t)=f(y(t)),\quad y(0)=y_0,
 \]
where $y\in\mathbb{R}^d$ is an unknown vector function, and $f\colon\mathbb{R}^d\to\mathbb{R}^d$ is a smooth vector field, which one assumes to be Lipschitz to ensure the usual existence and uniqueness results. 

\subsection{Butcher series methods}
A \emph{Butcher series method} is an iterative solution method for computing the value $y(T)$ by setting $T=Nh$ and defining the approximations $y_n\approx y(nh)$ by a one-step recursion
 \[
y_{n+1}=y_n+\sum_{\tau\in\RT(\bullet)}\frac{h^{|V(\tau)|}}{\sigma(\tau)}a(\tau)F(\tau)(f)(y_n),
 \]
where $\RT(\bullet)$ is the set of all rooted trees with indistinguishable vertices all carrying the same label $\bullet$, $V(\tau)$ is the set of vertices of the tree $\tau$, $\sigma(\tau)$ is the order of the symmetry group of the tree $\tau$, $a\colon \RT(\bullet)\to\mathbb{R}$ is a sequence of coefficients that define the method, and $F(\tau)(f)$ is the so called elementary differential corresponding to the rooted tree $\tau$. They are defined recursively by $F(\bullet)(f)=f$ and, if the root of $\tau$ has $k$ children $\tau_1$, \ldots, $\tau_k$, 
 \[
F(\tau)(f)=f^{(k)}(F(\tau_1)(f),\ldots,F(\tau_k)(f)),
 \]
so that, for example, for $f=\sum_{a=1}^d f^a\partial_a$, we have
 \[
F\left(
\vcenter{
\xymatrix@M=3pt@R=5pt@C=5pt{
*=0{\bullet}\ar@{->}[dr] &&  *=0{\bullet}\ar@{->}[dl] \\
& *=0{\bullet} & 
}}
\right)(f)=f''(f,f)=\sum_{a,b,c=1}^d f^af^b\partial_a\partial_b(f^c)\partial_c.
 \]
It is known \cite[Sec.~II.2, Exercise 4]{MR868663} that if one wishes to consider general methods that work for arbitrary dimension $d$, the elementary differentials corresponding to different rooted trees are linearly independent.

Philosophy of the backward error analysis suggests that one thinks of the numerical solution given by the formula above as the exact solution of the modified differential equation $\tilde{y}'(t)=\tilde{f}(\tilde{y}(t))$, where the modified vector field $\tilde{f}$ is obtained from $f$ by the rule
 \[
\tilde{f}=\sum_{\tau\in\RT(\bullet)}\frac{h^{|V(\tau)|-1}}{\sigma(\tau)}b(\tau)F(\tau).
 \]
where $b\colon \RT(\bullet)\to\mathbb{R}$ can be explicitly described \cite[Th.~9.2]{MR1904823}; for our purposes, it is only important that there is a one-to-one correspondence expressing the coefficients $a(-)$ via the coefficients $b(-)$ and vice versa, so that one can focus on the modified equation and the coefficients $b(-)$ only.

Suppose that the original ODE was source-free, so that we have $\Div(f)=0$. Then for a B-series method to be volume preserving, we should have the modified equation satisfy the same condition, that is, $\Div(\tilde{f})=0$. A direct computation \cite[Sec.~3]{MR2334044} shows that 
 \[
\Div(F(\tau))=\sum_{\alpha\in \circlearrowright(\tau)}F(\alpha),
 \]
where $\alpha$ is a directed cycle of rooted trees, $\circlearrowright(\tau)$ is the set of all directed cycles of rooted trees obtained from $\tau$ by ``closing it up into a cycle'', that is, drawing an arrow from the root to some vertex, and $F(\alpha)$ is the elementary differential given by the same formula as above. Similar to the case of elementary differentials and trees, one can show \cite[Sec.~4.2]{MR2334044} that in the case of general methods that work for arbitrary dimension $d$, the elementary differentials corresponding to different directed cycles of rooted trees are linearly independent.

In the case of a source-free equation, the condition $\Div(f)=0$ together with its derivatives allows one to simplify the expression for $\Div(\tilde{f})$, suppressing all the terms whose directed cycle is of length $1$ (a simple loop); the volume preservation condition on the Butcher series method requires that all other terms in $\Div(\tilde{f})$ cancel each other. As mentioned above, it turns out \cite{MR2317009,MR2334044} that the only volume-preserving method corresponds to the trivial flow.
 
\subsection{Aromatic Butcher series methods} 

Aromatic Butcher series are formal power series of unlabelled rooted trees $\tau$ with coefficients being disjoint unions of directed cycles of unlabelled rooted trees $\alpha$. Expressions like that are referred to as \emph{aromatic trees}. Both of those have the corresponding elementary differentials~$F(-)$, and we can even define elementary differentials for the indexing set of aromatic Butcher series; for instance, we have
 \[
F\left(
\vcenter{
\xygraph{ 
!{<0cm,0cm>;<0.4cm,0cm>:<0cm,0.4cm>::} 
!~-{@{-}@[|(2.5)]}
!{(-0.5,1) }*=0{\bullet}="a"
!{(0,0) }*=0{\bullet}="c"
"c" : @`{p+(2,1),c+(-2,1)} "c"
"a" : "c"
}}
\vcenter{
\xygraph{ 
!{<0cm,0cm>;<0.4cm,0cm>:<0cm,0.4cm>::} 
!~-{@{-}@[|(2.5)]}
!{(0,0) }*=0{\bullet}="a"
"a" : @`{p+(2,1),c+(-2,1)} "a"
}}
\vcenter{
\xygraph{ 
!{<0cm,0cm>;<0.5cm,0cm>:<0cm,0.5cm>::} 
!~-{@{-}@[|(3.0)]}
!{(0,1) }*=0{\bullet}="a"
!{(0,0) }*=0{\bullet}="c"
"a" : "c"
}}
\right)=
\sum_{p,q,r,s,t=1}^d f^p\partial_p\partial_q(f^q) \partial_r(f^r) f^s \partial_s(f^t)\partial_t = f'(\Div(f)) \Div(f) f'(f).
 \]
It turns out that there exist nontrivial volume-preserving integration schemes using aromatic trees, and it is even possible to describe all volume-preserving schemes. Similarly to the case of the 3D vector calculus, where the kernel of the divergence coincides with the image of curl, it is possible to describe all aromatic Butcher series with zero divergence, if one extends the picture to higher homological degrees. This was done by Laurent, McLachlan, Munthe-Kaas, and Verdier in~\cite{MR4624837}, and we briefly recall their approach here. They construct an object that they call the \emph{aromatic bicomplex}; it is related to the variational bicomplex \cite{MR1188434,MR600611,MR667492,MR483733} for the jet bundle of the tangent bundle of $\mathbb{R}^d$, except that it is, like everything we already discussed, a universal dimension-independent version of it, or rather the universal version of the subcomplex of invariants with respect to the action of $GL_d(\mathbb{R})$. 

To define the aromatic bicomplex, one considers \emph{aromatic forests}, that is, graphs for which each connected component is either a rooted tree or a directed cycle of rooted trees. It is convenient to start with ordered aromatic forests, which means that the cycle components can be reordered arbitrarily but the tree components are given a linear order. The next step is to add labels, and consider ordered aromatic forests whose vertices are coloured black and white, so that black vertices are indistinguishable and the white vertices are distinguishable (labelled by $\{1,\ldots,k\}$, where $k$ is the total number of white vertices). We shall denote $\calF_{n,p}$ the set of all labelled ordered aromatic forests with $n$ roots and $p$ white vertices. The \emph{aromatic bicomplex} $\Omega_{\bullet,\bullet}$ is obtained from the linear span of labelled ordered aromatic forests by anti-symmetrizing them over the group of permutations of the tree components and over the group of permutations of white vertices. One denotes by $\Omega_{p,q}$ the linear span of aromatic forests with $p$ rooted tree components and $q$ white vertices. The horizontal differential~$d^H$ of bi-degree $(-1,0)$ is the sum of all ways to adding an edge directed from the root of the last tree component to one of the vertices of our aromatic forest. The vertical differential~$d^V$ of bi-degree $(0,-1)$ is the skew-symmetrization over white vertices of the sum of all ways to replace a black vertex of an aromatic forest with a new white vertex. The \emph{divergence-free aromatic bicomplex} $\tilde\Omega_{\bullet,\bullet}$ is the quotient of $\Omega_{\bullet,\bullet}$  by the subcomplex of aromatic forests that contain at least one component with a directed cycle of length one. Volume preservation condition for aromatic Butcher series can be expressed in terms of the kernel of the horizontal differential of the aromatic bicomplex, which was the main motivation for introducing it in\cite{MR4624837}.   

\section{Rooted trees, cycles of rooted trees, and related coloured operads}\label{sec:colouredops}

Let $\RT$ be the linear species of rooted trees, so that for a finite set, the vector space $\RT(I)$ has a basis indexed by rooted trees on $I$. There is a remarkable operad structure on $\RT$ defined by Chapoton and Livernet \cite{MR1827084}. Specifically, if $T_1\in\RT(I\sqcup\{\star\})$, and $T_2\in\RT(J)$, the element 
 \[
T_1\circ_\star T_2\in\RT(I\sqcup J)
 \] 
is equal to the sum of all rooted trees $T$ for which the subgraph on the vertex set $J$ is isomorphic to $T_2$, and the result of contracting that subgraph onto a point labelled $\star$ is isomorphic to $T_1$. Equivalently, one replaces the vertex of $T_1$ labelled~$\star$ by the tree $T_2$, and connects the root to the output edge of that vertex (if any), and sums over all ways to connect the incoming edges of $\star$ in $T_1$ to vertices of $T_2$. For example, we have
 \[
\vcenter{
\xymatrix@M=3pt@R=5pt@C=5pt{
*+[o][F-]{1}\ar@{->}[dr] &&  *+[o][F-]{3}\ar@{->}[dl] \\
& *+[o][F-]{\star} & 
}}
\ \circ_\star \ 
\vcenter{
\xymatrix@M=3pt@R=5pt@C=5pt{
*+[o][F-]{a}\ar@{->}[d]  \\
*+[o][F-]{c} 
}}=
\vcenter{
\xymatrix@M=3pt@R=5pt@C=5pt{
*+[o][F-]{1}\ar@{->}[dr] &*+[o][F-]{a}\ar@{->}[d] & *+[o][F-]{3}\ar@{->}[dl]\\
& *+[o][F-]{c} & 
}}+
\vcenter{
\xymatrix@M=3pt@R=5pt@C=5pt{
*+[o][F-]{1}\ar@{->}[dr] & & *+[o][F-]{3}\ar@{->}[dl]\\
 &  *+[o][F-]{a}\ar@{->}[d]& \\
& *+[o][F-]{c} & 
}}+
\vcenter{
\xymatrix@M=3pt@R=5pt@C=5pt{
  & *+[o][F-]{3}\ar@{->}[d]\\
*+[o][F-]{1}\ar@{->}[dr] &  *+[o][F-]{a}\ar@{->}[d]& \\
& *+[o][F-]{c} & 
}}+
\vcenter{
\xymatrix@M=3pt@R=5pt@C=5pt{
*+[o][F-]{1}\ar@{->}[d] & & \\
  *+[o][F-]{a}\ar@{->}[d]& *+[o][F-]{3}\ar@{->}[dl]\\
 *+[o][F-]{c} & 
}}.
 \]
It is shown in \cite{MR1827084} that the operad $\RT$ is generated by 
 \[
\vcenter{
\xymatrix@M=3pt@R=5pt@C=5pt{
*+[o][F-]{2}\ar@{->}[d] \\
*+[o][F-]{1} & 
}} \in\RT(\{1,2\})
 \]
and is isomorphic to the operad $\PL$ of pre-Lie algebras. Recall that a \emph{pre-Lie algebra} is a vector space $A$ equipped with a binary operation $\triangleleft$ satisfying the identity
 \[
(a_1\triangleleft a_2)\triangleleft a_3 - a_1\triangleleft (a_2\triangleleft a_3)=(a_1\triangleleft a_3)\triangleleft a_2 - a_1\triangleleft (a_3\triangleleft a_2) 
 \]  
for all $a_1,a_2,a_3\in A$. For future reference, we note that the operation 
 \[
[a_1,a_2]:=a_1\triangleleft a_2-a_2\triangleleft a_1
 \]
satisfies the Jacobi identity and generates a suboperad of $\RT$ isomorphic to the operad of Lie algebras. 

Willwacher in \cite{MR4519144} considered a $\{\o,\m\}$-coloured operad $\RTM$ extending $\RT$. The only non-zero components of the two-coloured species $\RTM$ are 
 \[
\RTM(I,\varnothing;\o)=\RT(I)
 \] 
and 
 \[
\RTM(I,\{a\};\m)=\RT(I;a),
 \] 
the linear span of rooted trees on the vertex set $I\sqcup\{a\}$ for which the label of the root vertex is $a$. This species has a coloured operad structure that is an immediate generalization of the Chapoton--Livernet structure on $\RT$. We shall draw rooted trees from $\RT(I;a)$ with a dashed circle around the root vertex to not confuse those trees with trees from $\RT$; for instance, we have 
 \[
\vcenter{
\xymatrix@M=3pt@R=5pt@C=5pt{
*+[o][F-]{1}\ar@{->}[d] \\
*+[o][F.]{\star} & 
}}
\ \circ_\star \ 
\vcenter{
\xymatrix@M=3pt@R=5pt@C=5pt{
*+[o][F-]{a}\ar@{->}[d]  \\
*+[o][F.]{c} 
}}=
\vcenter{
\xymatrix@M=3pt@R=5pt@C=5pt{
*+[o][F-]{1}\ar@{->}[dr]& &*+[o][F-]{a}\ar@{->}[dl] \\
& *+[o][F.]{c} & 
}}+
\vcenter{
\xymatrix@M=3pt@R=5pt@C=5pt{
*+[o][F-]{1}\ar@{->}[d] \\
*+[o][F-]{a}\ar@{->}[d]& \\
*+[o][F.]{c} & 
}} .
 \]
It turns out \cite[Th.~3.4]{MR4519144} that the coloured operad $\RTM$ is generated by its elements
 \[
\vcenter{
\xymatrix@M=3pt@R=5pt@C=5pt{
*+[o][F-]{2}\ar@{->}[d] \\
*+[o][F-]{1} & 
}}\in \RTM(\{1,2\},\varnothing;\o) \quad\text{ and }
\quad
\vcenter{
\xymatrix@M=3pt@R=5pt@C=5pt{
*+[o][F-]{2}\ar@{->}[d] \\
*+[o][F.]{1} & 
}} \in \RTM(\{1\},\{2\};\m)
 \]
and is isomorphic to the coloured operad $\PLM$ whose algebras are pairs $(A,M)$, where $A$ is a pre-Lie algebra and $M$ is a right $A$-module. In general, the notion of a one-sided module for a given type of nonassociative algebras is not well defined; however, the leftmost argument in the pre-Lie identity does not move, which makes it possible to talk about right modules. Specifically, for a pre-Lie algebra $A$, we say that a \emph{right $A$-module} structure on vector space $M$ is a map $\ract\colon M\otimes A\to A$ such that 
 \[
(m\ract a_1)\ract a_2 - m\ract (a_1\triangleleft a_2)=(m\ract a_2)\ract a_1 - m\ract (a_2\triangleleft a_1) 
 \] 
for all $m\in M$, $a_1,a_2\in A$. Imposing this identity amounts to requiring that the assignment $m\mapsto m\ract a$ defines on $M$ a right module over the universal enveloping algebra of the Lie algebra $(A,[-,-])$.

The main algebraic structure studied in this paper is a new $\{\o,\m\}$-coloured operad $\RTW$ extending $\RTM$. The only non-zero components of the two-coloured species $\RTW$ are 
 \[
\RTW(I,\varnothing;\o)=\RT(I),\quad \RTW(I,\{a\};\m)=\RTM(I,\{a\};\m),
 \] 
and
 \[
\RTW(I,\varnothing;\m)=\Cyc(\RT)(I), 
 \] 
the linear span of (non-empty) directed cycles of rooted trees on the vertex set $I$. The species $\RTW$ has a coloured operad structure that is an immediate generalization of the Chapoton--Livernet structure on $\RT$; for instance, we have
 \[
\vcenter{
\xymatrix@M=3pt@R=5pt@C=5pt{
*+[o][F-]{1}\ar@{->}[d]  \\
*+[o][F.]{\star} & 
}}
\ \circ_\star \
\vcenter{
\xygraph{ 
!{<0cm,0cm>;<0.7cm,0cm>:<0cm,0.7cm>::} 
!~-{@{-}@[|(2.5)]}
!{(-0.5,1) }*+[o][F-]{a}="a"
!{(0,0) }*+[o][F-]{c}="c"
"c" : @`{p+(2,1),c+(-2,1)} "a"
"a" : "c"
}}=
\vcenter{\xygraph{ 
!{<0cm,0cm>;<0.7cm,0cm>:<0cm,0.7cm>::} 
!~-{@{-}@[|(2.5)]}
!{(-0.5,1) }*+[o][F-]{a}="a"
!{(-1,2) }*+[o][F-]{1}="b"
!{(0,0) }*+[o][F-]{c}="c"
"c" : @`{p+(2,1),c+(-2,1)} "a"
"a" : "c"
"b" : "a"
}}
+
\vcenter{\xygraph{ 
!{<0cm,0cm>;<0.7cm,0cm>:<0cm,0.7cm>::} 
!~-{@{-}@[|(2.5)]}
!{(-0.5,1) }*+[o][F-]{a}="a"
!{(-1.3,1) }*+[o][F-]{1}="b"
!{(0,0) }*+[o][F-]{c}="c"
"c" : @`{p+(2,1),c+(-2,1)} "a"
"a" : "c"
"b" : "c"
}}
 \]
and
 \[
\vcenter{ 
\xygraph{ 
!{<0cm,0cm>;<0.7cm,0cm>:<0cm,0.7cm>::} 
!~-{@{-}@[|(2.5)]}
!{(0,0) }*+[o][F-]{\star}="a"
"a" : @`{p+(2,1),c+(-2,1)} "a"
}}
\circ_\star
\vcenter{
\xymatrix@M=3pt@R=5pt@C=5pt{
*+[o][F-]{a}\ar@{->}[d]  \\
*+[o][F-]{c} 
}}=
\vcenter{
\xygraph{ 
!{<0cm,0cm>;<0.7cm,0cm>:<0cm,0.7cm>::} 
!~-{@{-}@[|(2.5)]}
!{(-0.5,1) }*+[o][F-]{a}="a"
!{(0,0) }*+[o][F-]{c}="c"
"c" : @`{p+(2,1),c+(-2,1)} "c"
"a" : "c"
}}+
\vcenter{
\xygraph{ 
!{<0cm,0cm>;<0.7cm,0cm>:<0cm,0.7cm>::} 
!~-{@{-}@[|(2.5)]}
!{(-0.5,1) }*+[o][F-]{a}="a"
!{(0,0) }*+[o][F-]{c}="c"
"c" : @`{p+(2,1),c+(-2,1)} "a"
"a" : "c"
}} \quad .
 \]
Inclusion of coloured operads $\RTM\subset\RTW$ means that every $\RTW$-algebra is, in particular, an $\RTM$-algebra, and so we may view $\RTW$-algebras as pairs $(A,M)$ of a pre-Lie algebra and its right module together with some extra structures. The simplest possible extra structure is given by the \emph{tadpole graph} consisting of one vertex and a loop edge at it:
 \[
\xygraph{ 
!{<0cm,0cm>;<0.7cm,0cm>:<0cm,0.7cm>::} 
!~-{@{-}@[|(2.5)]}
!{(0,0) }*+[o][F-]{1}="a"
"a" : @`{p+(2,1),c+(-2,1)} "a"
}
 \]
which belongs to $\RTW(\{1\},\varnothing;\m)$ and hence represents an operation from $A$ to $M$ that we shall denote by $\phi$. We note that the identity 
 \[
\vcenter{
\xygraph{ 
!{<0cm,0cm>;<0.7cm,0cm>:<0cm,0.7cm>::} 
!~-{@{-}@[|(2.5)]}
!{(0,0) }*+[o][F-]{\star}="a"
"a" : @`{p+(2,1),c+(-2,1)} "a"
}}
\circ_\star\left(
\vcenter{
\xymatrix@M=3pt@R=5pt@C=5pt{
*+[o][F-]{a}\ar@{->}[d]  \\
*+[o][F-]{c} 
}}-
\vcenter{
\xymatrix@M=3pt@R=5pt@C=5pt{
*+[o][F-]{c}\ar@{->}[d]  \\
*+[o][F-]{a} 
}}\right)
=
\vcenter{
\xymatrix@M=3pt@R=5pt@C=5pt{
*+[o][F-]{a}\ar@{->}[d]  \\
*+[o][F.]{\star} 
}}\circ_\star
\vcenter{
\xygraph{ 
!{<0cm,0cm>;<0.7cm,0cm>:<0cm,0.7cm>::} 
!~-{@{-}@[|(2.5)]}
!{(0,0) }*+[o][F-]{c}="a"
"a" : @`{p+(2,1),c+(-2,1)} "a"
}}-
\vcenter{
\xymatrix@M=3pt@R=5pt@C=5pt{
*+[o][F-]{c}\ar@{->}[d]  \\
*+[o][F.]{\star} 
}}
\circ_\star
\vcenter{
\xygraph{ 
!{<0cm,0cm>;<0.7cm,0cm>:<0cm,0.7cm>::} 
!~-{@{-}@[|(2.5)]}
!{(0,0) }*+[o][F-]{a}="a"
"a" : @`{p+(2,1),c+(-2,1)} "a"
}}
 \]
holds in $\RTW$, meaning that the tadpole graph corresponds to a linear map from $A$ to $M$ which is a $1$-cocycle for the Lie algebra $(A,[-,-])$. 

\section{The embedding theorem}\label{sec:embedding}

By contrast with the operads $\RT$ (generated by $\triangleleft$) and $\RTM$ (generated by $\triangleleft$ and $\ract$), the operad $\RTW$ is not generated by the operations $\triangleleft$, $\ract$ and $\phi$ (for instance, one can check directly that the difference of the two orientations of the $3$-cycle cannot be obtained from these operations by operadic compositions). However, it is still true that the ``obvious'' relations between $\triangleleft$, $\ract$ and $\phi$ are the defining relations of the coloured suboperad of $\RTW$ that they generate, which we prove in this section.

Let us denote by $\PLMC$ the $\{\o,\m\}$-coloured operad generated by the elements 
 \[
\triangleleft\in\PLMC(\{1,2\},\varnothing;\o),\quad \ract\in\PLMC(\{1\},\{2\};\m),\quad \phi\in\PLMC(\{1\},\varnothing;\m)
 \]
satisfying the relations described in the previous section, so that $\PLMC$-algebras are pairs $(A,M)$ with $A$ a pre-Lie algebra, $M$ is a right $A$-module, and $\phi$ is a Lie $1$-cocycle of $A$ with values in $M$.

\begin{proposition}\label{prop:combPLMC}
The operad $\PLMC$ admits the following combinatorial description.
\begin{gather*}
\PLMC(I,\varnothing;\o)=\RT(I), I\ne \varnothing,\\
\PLMC(I,\{a\};\m)=U_{\Lie}(\RT)(I), \\
\PLMC(I,\varnothing;\m)=\Omega^1_{\Lie}(\RT)\cong \bar{U}_{\Lie}(\RT)(I), I\ne \varnothing.
\end{gather*} 
Here one considers the pre-Lie algebra $\RT$ in species as a Lie algebra, and then forms the corresponding universal enveloping algebra and the module of Kähler differentials. 

The coloured operad structure corresponds to the following combinatorially define structure. The substitution of $\RT$ in the $\o$-inputs is given by the Chapoton--Livernet formulas. The substitution of $U_{\Lie}(\RT)$ and $\bar{U}_{\Lie}(\RT)$ in the $\m$-input of an operation from $\PLMC(-,\{a\};\m)\cong U_{\Lie}(\RT)(I)$ is the action of $U_{\Lie}(\RT)$ by right multiplication. 
\end{proposition}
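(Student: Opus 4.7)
The plan is to construct a morphism $\Psi\colon\PLMC\to\pazocal{C}$, where $\pazocal{C}$ denotes the combinatorially defined coloured operad on the right-hand side, and to show that $\Psi$ is bijective on each component. First, I would verify that $\pazocal{C}$ is indeed a coloured operad: substitution of a rooted tree at an $\o$-input uses Chapoton--Livernet (both for pure $\RT$-components and when the host operation lies in $U_{\Lie}(\RT)(I)$ or $\bar U_{\Lie}(\RT)(I)$, applied factor by factor through PBW); insertion at the $\m$-input is given by right multiplication in the enveloping algebra. Associativity of composition then reduces to the associativity of $U_{\Lie}(\RT)$ together with the fact that the Lie bracket coming from Chapoton--Livernet agrees with the commutator in $U_{\Lie}$.

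The map $\Psi$ sends $\triangleleft$ to the two-vertex rooted tree in $\RT(\{1,2\})$, $\ract$ to the single-vertex tree in $U_{\Lie}(\RT)(\{1\})$, and $\phi$ to the single-vertex tree in $\bar U_{\Lie}(\RT)(\{1\})$. The three defining relations of $\PLMC$ are then routine to check in $\pazocal{C}$: the pre-Lie relation is Chapoton--Livernet; the right-module relation translates to $ab - a\triangleleft b = ba - b\triangleleft a$ in $U_{\Lie}(\RT)$, which is the very definition of the Lie bracket $[a,b] = a\triangleleft b - b\triangleleft a$; and the cocycle relation for $\phi$ becomes $[a,b] = ab - ba$ in $\bar U_{\Lie}(\RT)$.

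Surjectivity of $\Psi$ on each component is straightforward: Chapoton--Livernet for the $\o$-part; and for the $\m$-parts, PBW expresses every element of $U_{\Lie}(\RT)(I)$ (respectively $\bar U_{\Lie}(\RT)(I)$) as an ordered product of trees, each product being realised by iterated $\ract$ starting either from the $\m$-input $a$ or from $\phi$ applied to the leading tree, with each tree built from $\triangleleft$.

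The main obstacle is injectivity. For the $\o$-part it is Chapoton--Livernet. For the $\m$-parts, I would argue by a universal-property argument. The right-module relation is equivalent to saying that $a\mapsto (m\mapsto m\ract a)$ extends to an action of the associative envelope $U_{\Lie}$ of the Lie algebra underlying the pre-Lie operad, so $\PLMC(I,\{a\};\m)$ is the free right module on one generator, which is exactly $U_{\Lie}(\RT)(I)$. For the last component, the cocycle relation together with the module structure reduces every expression to a normal form $\phi(t)\ract u$ with $t\in\RT$ and $u\in U_{\Lie}(\RT)$; the isomorphism $\Omega^1_{\Lie}(A)\cong\bar U_{\Lie}(A)$, obtained by sending the universal cocycle $d(a)$ to $a\in\bar U_{\Lie}(A)$ and observing that the cocycle identity becomes the commutator relation $[a,b]=ab-ba$ (which holds tautologically in $\bar U_{\Lie}(A)$), shows that no further identifications arise, finishing the proof.
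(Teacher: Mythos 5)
Your proposal is correct and follows essentially the same route as the paper: the substance in both cases is that the relations exhibit $\PLMC(-,\{a\};\m)$ as the free right $U_{\Lie}(\RT)$-module on one generator and $\PLMC(-,\varnothing;\m)$ as the universal target of a Lie $1$-cocycle, i.e.\ the K\"ahler differentials $\Omega^1_{\Lie}(\RT)\cong\bar U_{\Lie}(\RT)$. The paper states this more tersely as ``nearly tautological,'' while you spell out the comparison morphism and the surjectivity/injectivity check, but no new idea is involved.
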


\begin{proof}
The statement, once formulated precisely, is nearly tautological. Let us unwrap the argument step by step.

The component $\PLMC(-,\{a\};\m)$ is a right $U_{\Lie}(\RT)$-module; moreover, it is generated by the element 
$\vcenter{
\xymatrix@M=3pt@R=5pt@C=5pt{
*+[o][F.]{a} 
}}$, since the defining relations of our coloured operad define right modules without any extra properties.

The component $\PLMC(-,\varnothing;\m)$ is also a right $U_{\Lie}(\RT)$-module generated by elements $\phi(T)$ with $T\in\RT$, subject to the only relation
 \[
\phi([T_1,T_2])=\phi(T_1)\blacktriangleleft T_2-\phi(T_2)\blacktriangleleft T_1,
 \]
which identifies it with the module of Kähler differentials $\Omega^1_{\Lie}(\RT)$ for the Lie algebra associated to the pre-Lie algebra $\RT$. The latter module is precisely $\bar{U}_{\Lie}(\RT)$, and $\phi$ is under this isomorphism is identified with the universal derivation. 
\end{proof}

Let us use the previous statement to prove the main result of this section.

\begin{theorem}\label{th:emb}
The map of coloured operads $\PLMC\to\RTW$ defined by 
 \[
a_1\triangleleft a_2
\mapsto\vcenter{
\xymatrix@M=3pt@R=5pt@C=5pt{
*+[o][F-]{2}\ar@{->}[d] \\
*+[o][F-]{1} & 
}} ,
\quad
a_1\blacktriangleleft a_2
\mapsto\vcenter{
\xymatrix@M=3pt@R=5pt@C=5pt{
*+[o][F-]{2}\ar@{->}[d] \\
*+[o][F.]{1} & 
}} ,
\quad
\phi(a_1)
\mapsto\vcenter{
\xygraph{ 
!{<0cm,0cm>;<0.7cm,0cm>:<0cm,0.7cm>::} 
!~-{@{-}@[|(2.5)]}
!{(0,0) }*+[o][F-]{1}="a"
"a" : @`{p+(2,1),c+(-2,1)} "a"
}}
 \]
is injective. 
\end{theorem}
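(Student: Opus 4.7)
My overall plan is to reduce injectivity to a component-wise check and to handle each colour signature separately. The map preserves colour, so it suffices to check injectivity of $\PLMC(I,J;c)\to\RTW(I,J;c)$ for each $(I,J,c)$; by Proposition~\ref{prop:combPLMC}, only three non-trivial signatures $(J,c)$ arise, and I will treat them in turn.

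For $(\varnothing,\o)$, the induced map is the classical Chapoton--Livernet isomorphism $\PL\xrightarrow{\sim}\RT$. For $(\{a\},\m)$, the only defining relations of $\PLMC$ that can affect this component involve $\triangleleft$ and $\blacktriangleleft$ but not $\phi$ (the cocycle relation for $\phi$ sits entirely in $\PLMC(-,\varnothing;\m)$ and contributes nothing here), so the map restricts to Willwacher's isomorphism $\PLM\xrightarrow{\sim}\RTM$ from \cite[Th.~3.4]{MR4519144}. Both of these are injective.

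The essential case is $(\varnothing,\m)$, where I must show that $\bar{U}_{\Lie}(\RT)(I)\cong\Omega^1_{\Lie}(\RT)(I)\to\Cyc(\RT)(I)$ is injective. By direct inspection of the composition law in $\RTW$, the map sends the universal derivation $\phi(T)$ for $T\in\RT(I)$ to the divergence sum $\sum_{v\in V(T)}C_v(T)$, where $C_v(T)$ is the directed cycle obtained by adjoining an arrow from the root of $T$ to the vertex $v$; the map extends via the right $U_{\Lie}(\RT)$-action on $\Cyc(\RT)$ realised by grafting trees onto vertices of cycles. My plan for this case is to invoke the Guin--Oudom isomorphism $\bar{U}_{\Lie}(\RT)\cong\bar{S}(\RT)$ of right $U_{\Lie}(\RT)$-modules and then run an associated-graded argument. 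The intended filtration pairs the symmetric-degree filtration on $\bar{S}(\RT)$ with the cycle-length filtration on $\Cyc(\RT)$ (possibly refined using a further invariant encoding the shape of the trees involved), and the goal is to exhibit the top-degree piece of the map on $S^k(\RT)$ as the cyclic-symmetrization embedding $S^k(\RT)\hookrightarrow\Cyc_k(\RT)$, which corresponds to the inclusion of $S_k$-invariants into $C_k$-invariants inside $\RT^{\otimes k}$ and is injective in characteristic zero.

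The main obstacle will be calibrating this filtration correctly and verifying triangularity. A naive symmetric-degree filtration on the source does not map cleanly into the cycle-length filtration on the target because $\phi(T)$ alone produces cycle components of various lengths depending on the height of $T$, and the Oudom--Guin formula for the $*$-product on $S(\RT)$ introduces correction terms in strictly lower symmetric degree. The bookkeeping required to separate the genuinely leading contribution from these mixed lower-order terms is the heart of the argument; once that is done, injectivity on associated graded pieces will propagate to injectivity of the full map by standard filtration arguments.
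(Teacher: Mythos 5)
Your reduction to the three colour signatures, the treatment of the $(\varnothing;\o)$ and $(\{a\};\m)$ components via Chapoton--Livernet and Willwacher, and the identification of the remaining source $\PLMC(-,\varnothing;\m)\cong\Omega^1_{\Lie}(\RT)\cong\bar{U}_{\Lie}(\RT)\cong\bar{S}(\RT)$ via PBW all coincide with the paper's argument, and your intended endgame (leading term of the image of $S^k(\RT)$ equals the cyclic symmetrization $S^k(\RT)\hookrightarrow\Cyc_k(\RT)$, which is injective in characteristic zero) is exactly the right target. However, the step you defer as ``bookkeeping'' is not bookkeeping: it is the one genuinely nontrivial idea of the proof, and the naive filtration you propose does fail. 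If you push an actual PBW monomial $T_1\cdots T_k$ through the map you get $\Div(T_1)\ract T_2\ract\cdots\ract T_k$; since $\Div(T_1)$ already contains the length-one cycle $\tau(T_1)$ and the right action $\ract$ never increases cycle length, this image has nonzero components in cycle length $1$, not leading term in cycle length $k$. Worse, projection onto cycle length $1$ cannot detect injectivity for dimension reasons ($n^{n-1}<(n+1)^{n-1}$), so no associated-graded argument based on the raw PBW basis and the cycle-length filtration can close the argument. You acknowledge the mismatch but do not supply the device that repairs it.

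The paper's repair is a change of generators on the source side: it defines ``cyclic braces'' $\langle a_1,\ldots,a_n\rangle$ inductively by $\langle a_1\rangle=\Div(a_1)$ and $\langle a_1,\ldots,a_{n+1}\rangle=\sum_{k=1}^n\langle a_1,\ldots,a_k\triangleleft a_{n+1},\ldots,a_n\rangle-\langle a_1,\ldots,a_n\rangle\ract a_{n+1}$, manifestly in the image of $\PLMC\to\RTW$, and shows by induction that $\langle a_1,\ldots,a_n\rangle$ is the full symmetrization of the $n$-cycle; already for $n=2$ one checks that the length-one terms cancel. Consequently, for a forest $T_1,\ldots,T_s$, the element $\langle T_1,\ldots,T_s\rangle$ equals the sum over cyclic orders of the $T_i$ plus cycles of length strictly greater than $s$ --- it has \emph{no} terms of cycle length below $s$, which is precisely the triangularity your plan needs and which the PBW monomials do not have. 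Linear independence of these elements then bounds the image dimension below by the number of forests, $(n+1)^{n-1}$, matching the PBW dimension of the source. Your proposal is therefore the correct skeleton with its load-bearing element missing; to complete it you would need to produce these braces (or an equivalent family of image elements purged of short cycles), after which the rest goes through as you describe.
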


\begin{proof}
It follows from the results of Chapoton--Livernet \cite{MR1827084} and Willwacher \cite{MR4519144} mentioned in Section \ref{sec:colouredops} that this map induces isomorphisms 
\begin{gather*}
\PLMC(I,\varnothing;\o)\cong\RTW(I,\varnothing;\o),\\
\PLMC(I,\{a\};\m)\cong\RTW(I,\{a\};\m).
\end{gather*}
It remains to prove injectivity for the components $\PLMC(-,\varnothing;\m)$. According to Proposition \ref{prop:combPLMC}, such components are identified with $\bar{U}_{\Lie}(\RT)$, which, according to the Poincaré--Birkhoff--Witt theorem, can be identified with $\bar{S}(\RT_{\Lie})$. The latter is the species of non-empty sets of rooted trees, so for $|I|=n$ it is of the dimension $(n+1)^{n-1}$. Thus, the proof will be complete if we show  that the dimension of the image of $\PLMC(I,\varnothing;\m)$ is at least $(n+1)^{n-1}$.

We shall use the following combinatorial construction of ``cyclic braces''. Let us define elements $\langle a_1,\ldots,a_n\rangle\in\RTW(\{1,\ldots,n\},\varnothing;\m)$ inductively as follows. We set $\langle a_1\rangle:=\Div(a_1)$, and then let $\langle a_1,\ldots,a_{n+1}\rangle$ be given by 
 \[
\sum_{k=1}^n\langle a_1,\ldots,a_k\triangleleft a_{n+1},\ldots,a_n\rangle-\langle a_1,\ldots,a_n\rangle\blacktriangleleft a_{n+1}.
 \]
Clearly, these elements belong to the image of the map $\PLMC\to\RTW$, and moreover, it is easy to show by induction that $\langle a_1,\ldots,a_n\rangle$ is the complete symmetrization of the $n$-cycle.   

To complete the proof, note that for any rooted trees $T_1$,\ldots, $T_s$ on disjoint sets of vertices, the element $\langle T_1,\ldots,T_s\rangle$ is made of two terms, the sum over all cyclic orders of $T_1$,\ldots, $T_s$, and a sum of directed cycles of length greater than $s$. Since the element $\langle T_1,\ldots,T_s\rangle$ belongs to the image of the map $\PLMC\to\RTW$, this implies that the dimension of the image is not smaller than the number of forests of rooted trees on $n$ vertices, which is equal to $(n+1)^{n-1}$, as required. 
\end{proof}

Let us remark that the results of this section are closely related to work of Aval, Giraudo, Karaboghossian and Tanasa \cite{MR4138694} on graph insertion operads. In particular, their operad $\mathbf{LP}$ appears related to our coloured operad $\PLMC$. Our feeling, however, is that the general context of graph insertion operads leads to particularly interesting examples if one either considers coloured operads or imposes some homological degrees, like in operads that are behind the Kontsevich's graph complexes \cite{MR3348138}.

\section{Operadic approach to divergence}\label{sec:divergence}

\subsection{The action of the tadpole graph on trees}

Insertion of a tree into the only vertex of the tadpole graph defines the map 
 \[
\RTW(I,\varnothing;\o)\to \RTW(I,\varnothing;\m)
 \]
that we shall denote $\Div$ and call the \emph{divergence}. 

\begin{proposition}\label{prop:inj}
The map $\Div$ is injective.
\end{proposition}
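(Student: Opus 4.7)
The plan is to recognize $\Div$ as operadic composition with the generator $\phi$ of $\PLMC$ and then appeal to the structural results already established in Section~\ref{sec:embedding}. By construction, for $T \in \RT(I) = \RTW(I,\varnothing;\o)$ the element $\Div(T)$ is nothing but $\phi \circ_1 T$ computed in $\RTW$, where the tadpole graph represents the image of $\phi \in \PLMC(\{1\},\varnothing;\m)$ under the embedding of Theorem~\ref{th:emb}. Thus $\Div$ factors as
\[
\RT(I) \cong \PLMC(I,\varnothing;\o) \xrightarrow{\phi \circ_1 -} \PLMC(I,\varnothing;\m) \hookrightarrow \RTW(I,\varnothing;\m),
\]
and since the second arrow is injective by Theorem~\ref{th:emb}, it suffices to establish injectivity of the first arrow.

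By Proposition~\ref{prop:combPLMC}, we have an identification $\PLMC(I,\varnothing;\m) \cong \Omega^1_{\Lie}(\RT)(I) \cong \bar{U}_{\Lie}(\RT)(I)$ under which $\phi$ corresponds to the universal derivation of the Lie algebra associated to $\RT$. For any Lie algebra $L$, the universal derivation $L \to \Omega^1_{\Lie}(L)$ matches the canonical inclusion $L \hookrightarrow \bar{U}_{\Lie}(L)$ under the standard isomorphism $\Omega^1_{\Lie}(L) \cong \bar{U}_{\Lie}(L)$ sending $dx$ to $x$. Consequently, the map $T \mapsto \phi \circ_1 T$ is identified with the inclusion $\RT(I) \hookrightarrow \bar{U}_{\Lie}(\RT)(I)$.

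Finally, the Poincar\'e--Birkhoff--Witt theorem furnishes an isomorphism of species $\bar{U}_{\Lie}(\RT) \cong \bar{S}(\RT)$ in which $\RT$ sits as the length-one symmetric part, so the inclusion is manifestly injective. All the substantial work has already been carried out in the proofs of Proposition~\ref{prop:combPLMC} and Theorem~\ref{th:emb}; the only point that requires attention is the matching between the operadic description of $\Div$ and the algebraic description of $\phi$ as a universal derivation, but this is tautological once one recalls the way $\phi$ was introduced in Section~\ref{sec:colouredops}.
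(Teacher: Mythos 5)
Your argument is correct and non-circular (Proposition~\ref{prop:inj} is not used in the proofs of Proposition~\ref{prop:combPLMC} or Theorem~\ref{th:emb}, so you may freely invoke them), but it takes a genuinely different and much heavier route than the paper. The paper's proof is a one-line combinatorial observation: among the terms of $\Div(T)$ there is the graph $\tau(T)$ obtained by closing the root into a cycle of length one, this term determines $T$ (delete the loop edge to recover the tree), and it cannot occur in $\Div(T')$ for $T'\ne T$; hence the elements $\Div(T)$ are linearly independent. Your proof instead identifies $\Div$ with $\phi\circ_1-$, pushes it through the isomorphisms $\PLMC(I,\varnothing;\m)\cong\Omega^1_{\Lie}(\RT)(I)\cong\bar{U}_{\Lie}(\RT)(I)$ of Proposition~\ref{prop:combPLMC}, and concludes by the Poincar\'e--Birkhoff--Witt theorem; each step checks out, including the matching of $\phi\circ_1 T$ with the universal derivation applied to $T$ and hence with the inclusion $\RT\hookrightarrow\bar{U}_{\Lie}(\RT)$. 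What your route buys is conceptual uniformity with the rest of Section~\ref{sec:divergence} (it is essentially the same mechanism that later computes $\ker\Div_0$ in Theorem~\ref{th:div0}); what it costs is that you invoke the hardest result of Section~\ref{sec:embedding} to prove an elementary fact, and you lose the byproduct of the paper's argument, namely the explicit map $\tau$, which is introduced inside that proof precisely so that the reduced divergence $\Div_0=\Div-\tau$ can be defined immediately afterwards. Both proofs are valid; for a statement this simple the direct combinatorial one is preferable, but it is worth noting that your factorization through $\PLMC$ is exactly the observation the paper makes at the start of the proof of Theorem~\ref{th:div0}.
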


\begin{proof}
The image of every tree under $\Div$ includes the element $\tau(T)$, where $\tau$ is the map that makes the root of a tree into a cycle of length one. Such element does not appear in $\Div(T')$ for $T'\ne T$, and therefore, all elements $\Div(T)$ are linearly independent.
\end{proof}

To obtain a map with an ``interesting'' kernel, we should therefore consider the \emph{reduced divergence} $\Div_0:=\Div-\tau$, where $\tau$ is the map defined in the previous proof. Concretely, for each rooted tree $T$, the element $\Div_0(T)$ is the sum of all ways of closing $T$ into a cycle by drawing an arrow from the root to some other vertex, so that the cycle is of length at least two. We are now ready to prove one of the main results of the paper. 

\begin{theorem}\label{th:div0}
 The kernel of the map $\Div_0$ is precisely the suboperad $\Lie\subset\RT$. 
\end{theorem}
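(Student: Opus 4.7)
The plan is to work through the embedding $\PLMC \hookrightarrow \RTW$ of Theorem \ref{th:emb} and the PBW identification $\PLMC(-,\varnothing;\m) \cong \bar{U}_{\Lie}(\RT)$ of Proposition \ref{prop:combPLMC}. Let $\tau \colon \RT \to \RTW(-,\varnothing;\m)$ denote the linear map sending a tree $T$ to the $1$-cycle at its root carrying $T$, so that by construction $\Div = \tau + \Div_0$. A direct combinatorial check shows that $\tau$ is a morphism of right $\RT$-modules: $\tau(T_1 \triangleleft T_2) = \tau(T_1) \blacktriangleleft T_2$, because both sides equal the sum over $v \in V(T_1)$ of the $1$-cycle at the root of $T_1$ carrying the tree obtained by grafting $T_2$ at $v$. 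Since $\tau(\bullet) = \phi(\bullet) \in \PLMC$ and $\PLMC$ is closed under $\blacktriangleleft$, induction on arity yields that $\tau$, and therefore $\Div_0$, take values in $\PLMC(-,\varnothing;\m)$. Under the identification $\PLMC(-,\varnothing;\m) \cong \bar{U}_{\Lie}(\RT)$, the map $\Div$ coincides with the canonical inclusion $T \mapsto T \in \RT_{\Lie} \subset \bar{U}_{\Lie}(\RT)$, while $\tau$ becomes the unique right $\RT$-module map with $\tau(\bullet_i) = \bullet_i$, explicitly satisfying $\tau(T_1 \triangleleft T_2) = \tau(T_1) \cdot T_2$ (associative product in $\bar{U}_{\Lie}(\RT)$). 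In particular $\Div_0(T) = T - \tau(T)$ in $\bar{U}_{\Lie}(\RT)$.

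For the inclusion $\Lie \subset \ker \Div_0$, I would prove by induction on arity that $\tau(x) = x$ in $\bar{U}_{\Lie}(\RT)$ for every $x \in \Lie$. The base case $\tau(\bullet) = \bullet$ is tautological. For $x = [y_1, y_2]$ with $y_i \in \Lie$ of strictly smaller arity, one computes
\[
\tau([y_1, y_2]) = \tau(y_1) \cdot y_2 - \tau(y_2) \cdot y_1 = y_1 y_2 - y_2 y_1 = [y_1, y_2]_{\bar{U}_{\Lie}(\RT)} = [y_1, y_2]_{\RT_{\Lie}} = x,
\]
invoking the inductive hypothesis $\tau(y_i) = y_i$ and the fact that the Lie bracket on $\RT_{\Lie}$ is realized as the commutator in $\bar{U}_{\Lie}(\RT)$. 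Therefore $\Div_0(x) = x - \tau(x) = 0$ for all $x \in \Lie$.

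For the reverse inclusion $\ker \Div_0 \subset \Lie$, suppose $\tau(T) = T$ in $\bar{U}_{\Lie}(\RT)$. Invoking the PBW isomorphism $\bar{U}_{\Lie}(\RT) \cong \bar{S}(\RT_{\Lie})$ of Proposition \ref{prop:combPLMC}, the element $T \in \RT_{\Lie}$ sits in $\bar{S}^1$, so the equation forces all higher-symmetric-degree components of $\tau(T)$ to vanish. The plan is to show that the linear map $T \mapsto [\tau(T)]_{\bar{S}^{\geq 2}}$ from $\RT$ to $\bar{S}^{\geq 2}(\RT_{\Lie})$ has kernel exactly $\Lie$. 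The idea is a leading-term argument driven by the recursion $\tau(T_1 \triangleleft T_2) = \tau(T_1) \cdot T_2$: the top PBW-degree component of such a product is the symmetric product of the top components of its factors, so the vanishing of the $\bar{S}^{\geq 2}$-part forces cancellations that only antisymmetric (i.e., Lie) combinations can achieve. The main obstacle is making this leading-term analysis fully rigorous, since the leading PBW-degree of $\tau(T)$ depends intricately on the shape of $T$; a clean route is to expand $\Div_0(T)$ in the basis of $\PLMC(-,\varnothing;\m)$ given by the cyclic braces $\langle T_1, \ldots, T_s \rangle$ (for $s \geq 2$, with $T_i$ ranging over a basis of $\RT_{\Lie}$) produced in the proof of Theorem \ref{th:emb}, and verify by direct computation that the vanishing of all these coefficients forces $T$ to be a sum of iterated Lie brackets of the generators.
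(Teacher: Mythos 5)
Your setup coincides with the paper's: reduce to the suboperad $\PLMC$ via Theorem \ref{th:emb}, identify $\PLMC(-,\varnothing;\m)$ with $\bar{U}_{\Lie}(\RT)$ via Proposition \ref{prop:combPLMC}, under which $\Div$ becomes the canonical inclusion $\RT\hookrightarrow\bar{U}_{\Lie}(\RT)$ and $\tau$ the right-module map determined by $\tau(\bullet)=\bullet$ and $\tau(T_1\triangleleft T_2)=\tau(T_1)\cdot T_2$. Your verification of this module property and your inductive proof of the inclusion $\Lie\subseteq\ker\Div_0$ are correct (and make explicit what the paper absorbs into the derivation property of the universal derivation).

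The reverse inclusion is where all the content lies, and there you have a plan rather than a proof. The assertion that the map $T\mapsto[\tau(T)]_{\bar{S}^{\ge 2}}$ has kernel exactly $\Lie$ is never established: you acknowledge that the leading-term argument is not rigorous (and indeed the top PBW degree of $\tau(T)$ is not simply multiplicative in a decomposition $T=T_1\triangleleft T_2$, since $\triangleleft$ is a sum over all graftings and substantial cancellation occurs), while the fallback of expanding in the cyclic-brace basis and ``verifying by direct computation'' is precisely the hard combinatorial statement to be proved, not a proof of it. The missing ingredient, which closes the argument in one line from exactly the point you reached, is the observation (going back to Umirbaev, proved by a short Poincaré--Birkhoff--Witt argument) that for a Lie subalgebra $L'$ of a Lie algebra $L$ one has $L'U_{\Lie}(L)\cap L=L'$. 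Your recursion shows that $\tau(\RT)$ is contained in the right ideal of $U_{\Lie}(\RT)$ generated by the one-vertex tree, which coincides with the right ideal $\Lie\cdot U_{\Lie}(\RT)$; hence $\Div_0(T)=0$, i.e.\ $T=\tau(T)$, forces $T\in\RT\cap\Lie\cdot U_{\Lie}(\RT)=\Lie$. (Your target statement about $[\tau(T)]_{\bar{S}^{\ge 2}}$ is in fact true, but its only reasonable proof again passes through this lemma together with the injectivity of $\tau$, so it cannot serve as a substitute for it.) Without this, or an equivalent PBW-type input, your proof of $\ker\Div_0\subseteq\Lie$ is incomplete.
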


\begin{proof}
Note that the maps $\Div$, $\tau$, and $\Div_0$ are in fact defined inside the coloured suboperad generated by 
\[
\vcenter{
\xymatrix@M=3pt@R=5pt@C=5pt{
*+[o][F-]{2}\ar@{->}[d] \\
*+[o][F-]{1} & 
}} ,
\quad
\vcenter{
\xymatrix@M=3pt@R=5pt@C=5pt{
*+[o][F-]{2}\ar@{->}[d] \\
*+[o][F.]{1} & 
}} , \text{ and }
\quad
\vcenter{
\xygraph{ 
!{<0cm,0cm>;<0.7cm,0cm>:<0cm,0.7cm>::} 
!~-{@{-}@[|(2.5)]}
!{(0,0) }*+[o][F-]{1}="a"
"a" : @`{p+(2,1),c+(-2,1)} "a"
}} ,
 \]
which, according to Theorem \ref{th:emb}, is isomorphic to $\PLMC$. Thus, we can compute the kernel inside the operad $\PLMC$. In the view of Proposition \ref{prop:combPLMC}, the reduced divergence can be described as the composite
 \[
\RT\to \Omega^1_{\Lie}(\RT)\to \Omega^1_{\Lie}(\RT)/\Div(\id)U_{\Lie}(\RT)
 \]
of the universal derivation of the Lie algebra associated to the pre-Lie algebra $\RT$ and the quotient by the right $U_{\Lie}(\RT)$-submodule generated by $\Div(\id)$. What we wish to prove is that the kernel of this map is $\Lie$, that is the Lie subalgebra generated by $\id$. This is done using the following general observation going back to the work of Umirbaev \cite{MR1064070,MR1302528} (once stated, it can be proved by an easy argument relying on the Poincaré--Birkhoff--Witt theorem): if $L$ is a Lie algebra and $L'$ is a Lie subalgebra, then $L'U_{\Lie}(L)\cap L=L'$. In our case, under the identification of $\Omega^1_{\Lie}(\RT)$ with $\bar{U}_{\Lie}(\RT)$, the right $U_{\Lie}(\RT)$-submodule of $\Omega^1_{\Lie}(\RT)$ generated by $\Div(\id)$ is identified with the right ideal of $U_{\Lie}(\RT)$ generated by $\Lie$, and the abovementioned observation applies.  
\end{proof}

\subsection{Lack of volume-preserving B-series methods}\label{sec:proofIserles}

As we recalled in Section~\ref{sec:recollection}, a Butcher series method is given by a power series with terms indexed by unlabelled rooted trees. Divergence of such series takes values in the space of power series with terms indexed by directed cycles of unlabelled rooted trees, and for an individual unlabelled rooted tree, its divergence given by the sum of all ways of closing it into a cycle by drawing an arrow from the root to a vertex. For studying volume-preserving Butcher series methods, one takes the quotient by the vector subspace spanned by all directed cycles of length one; therefore, the divergence becomes the reduced divergence discussed above. One says that a Butcher series defines a volume-preserving method if its reduced divergence is zero.

A general negative result in the theory of Butcher series methods is the theorem proved independently by Iserles, Quispel and Tse in \cite{MR2334044} and by Chartier and Murua in \cite{MR2317009} that states that the only volume-preserving B-series method is the exact solution for which the flow $\tilde{f}$ coincides with $f$. This is an immediate consequence of our Theorem \ref{th:div0}, provided that one makes the following observation. Suppose that $\calA$ is a species, which we view as a certain combinatorial structure with labels from finite sets. Then the composite product of $\calA$ with the ground field $\k$ viewed as a species supported at arity zero is 
 \[
\calA\circ\k=\bigoplus_{n\ge 0}\calA(\{1,\ldots,n\})_{S_n}
 \]
is the vector space with a basis of \emph{unlabelled} $\calA$-structures. As an example, recall that Butcher series are made of linear combinations of elementary differentials, and those are indexed by the set $\RT(\bullet)$ of unlabelled rooted trees. Since $\RT$ is the underlying species of the pre-Lie operad, the remark we just made means that $\RT(\bullet)$ can be identified with the free pre-Lie algebra on one generator. Under this identification, one may view the elementary differential $F(\tau)$ as the image of $\tau$ under the homomorphism from the free pre-Lie algebra $\RT(\bullet)$ to the pre-Lie algebra $\Vect(\mathbb{R}^d)$ of all smooth vector fields on $\mathbb{R}^d$ under the pre-Lie algebra homomorphism sending $\bullet$ to $f$. 

\begin{proposition}\label{prop:volume}
The only volume-preserving B-series method is the exact flow.
\end{proposition}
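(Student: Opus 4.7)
The plan is to transfer the analytic volume-preservation condition to the combinatorial kernel computation carried out in Theorem~\ref{th:div0}. Following the observation immediately preceding the statement, I view the modified vector field
\[
\tilde f=\sum_{\tau\in\RT(\bullet)}\frac{h^{|V(\tau)|-1}}{\sigma(\tau)}b(\tau)F(\tau)
\]
as the image of the formal series $B:=\sum_{\tau}\frac{h^{|V(\tau)|-1}}{\sigma(\tau)}b(\tau)\tau\in\RT(\bullet)\otimes\k[[h]]$ under the unique pre-Lie algebra homomorphism sending the generator $\bullet$ of the free pre-Lie algebra $\RT(\bullet)$ to $f\in\Vect(\mathbb{R}^d)$. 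By naturality of the coloured operad structure, the reduced divergence of $\tilde f$ is obtained by evaluating $\Div_0(B)$ in the same $\RTW$-algebra, with the tadpole generator acting by the analytic divergence of vector fields.

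The next step uses the linear independence of elementary differentials attached to distinct directed cycles of rooted trees when the ambient dimension $d$ is kept unrestricted, as recalled in Section~\ref{sec:recollection}. This linear independence implies that the volume-preservation condition $\Div_0(\tilde f)=0$, required to hold for arbitrary $d$, is equivalent to $\Div_0(B)=0$ inside $\RT(\bullet)\otimes\k[[h]]$. Theorem~\ref{th:div0}, applied coefficient by coefficient in $h$, then forces $B$ to lie in $\Lie(\bullet)\otimes\k[[h]]$, where $\Lie(\bullet)=\Lie\circ\k$ is the free Lie algebra on one generator.

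It remains to observe that the free Lie algebra on a single generator is one-dimensional: it is spanned by that generator itself and vanishes in every arity $\ge 2$. Consequently $b(\tau)=0$ for every rooted tree $\tau$ with at least two vertices, and $\tilde f=b(\bullet)f$, a scalar rescaling of $f$ that equals $f$ under the standard consistency normalization $b(\bullet)=1$. I do not anticipate any genuine obstacle: all the real content sits in Theorem~\ref{th:div0}, and the only mild point worth checking is that the operadic reduced divergence $\Div_0=\Div-\tau$ of Section~\ref{sec:divergence} matches the analytic passage to the quotient by length-one cycles, which it does by construction.
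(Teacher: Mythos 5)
Your proof is correct and follows essentially the same route as the paper: translate volume preservation into the condition $\Div_0=0$ on the unlabelled (coinvariant) level, invoke Theorem~\ref{th:div0} degree by degree, and conclude from the one-dimensionality of the free Lie algebra on one generator. The extra care you take in justifying the passage from the analytic condition to the combinatorial one via linear independence of elementary differentials is exactly the observation the paper places just before the proposition.
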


\begin{proof}
Indeed, since we are considering rooted trees and cycles of rooted trees with unlabelled vertices, a volume-preserving Butcher series is in the kernel of the map 
 \[
\Div_0\colon \RT\mathop{\circ}\k\to\Cyc^+(\RT)\mathop{\circ}\k
 \]
(or rather in the completion with respect to the number of vertices, but the reduced divergence is homogeneous, so we may avoid thinking about the completion, computing the kernel degree by degree). According to Theorem \ref{th:div0}, the kernel of the map
 \[
\Div_0\colon \RT\to \Cyc^+(\RT)
 \] 
is the operad $\Lie$, and evaluating on $\k$, we get the free Lie algebra on one element, which is one-dimensional and is given by the vector space spanned by the one-vertex tree. 
\end{proof}

Note that in \cite{MR2317009}, the right-hand side of a given ODE is split into several parts, and one considers integration schemes allowing to use those parts separately. The upshot is \cite[Th.~4.10]{MR2317009} asserting that the flow must be contained in the Lie algebra generated by those parts, which is also a consequence of our operadic result, proved in the exact same way as Proposition \ref{prop:volume}. 
We also note that the proof of \cite[Th.~4.10]{MR2317009} relies crucially on results of Murua \cite{MR2271214}, many of which can be explained from the viewpoint of the coloured operad $\PLMC$.

\section{Differential graded Lie algebras and the aromatic bicomplex}\label{sec:dgla}

\subsection{Two differential graded Lie algebras in species}

Let us consider the pair of species $(\RT,\Cyc(\RT))$ with the three basic structures these species have (a Lie algebra in species, its module, and a $1$-cocycle $\phi$). It is a general fact that for a Lie algebra $\mathfrak{g}$ and a $\mathfrak{g}$-module $M$, the $1$-cocycle equation for a map $\phi\colon \mathfrak{g}\to M$ amounts to the fact that 
the operations
\begin{gather*}
[g_1+s^{-1}m_1,g_2+s^{-1}m_2]:=[g_1,g_2]+s^{-1}(g_1(m_2)-g_2(m_1)),\\
d(g+s^{-1}m)=s^{-1}\phi(g)
\end{gather*} 
defined a differential graded Lie algebra on $\mathfrak{g}\oplus s^{-1}M$. In particular, we have a differential graded Lie algebra 
 \[
\calL:=\RT\stackrel{s^{-1}\Div}{\longrightarrow}s^{-1}\Cyc(\RT)
 \]
where the nonzero Lie brackets are given by the Lie bracket on $\RT$ and the module action. To each differential graded Lie algebra $(L,d)$, one can associate its Chevalley--Eilenberg complex $C_\bullet^{CE}(L)$, so that 
 \[
C_\bullet^{CE}(L)=(\Com^c(sL),d^{CE}),
 \]
meaning that, as a species, it is the cofree cocommutative coalgebra $\Com^c(sL)$, but it additionally has a codifferential $d^{CE}=d_1^{CE}+d_2^{CE}$, where $d_1^{CE}$ is the canonical tensorial extension of $d$ from $L$ to $\Com^c(sL)$, and $d_2^{CE}$ is the extension of the map
 \[
s^{-1}[-,-]\colon S^2(sL)\to sL
 \] 
to a coderivation of $\Com^c(sL)$; if we identify $\Com^c(sL)$ with antisymmetric tensors, this unfolds to the formula
\begin{multline*}
d_2^{CE}(x_1\wedge\cdots\wedge x_n)=\\
\sum_{1\le i<j\le n}(-1)^{i+j-1}[x_i,x_j]\wedge x_1\wedge\cdots\wedge x_{i-1}\wedge x_{i+1}\wedge\cdots\wedge x_{j-1}\wedge x_{j+1}\wedge\cdots \wedge x_n , 
\end{multline*}
usually appearing in the textbook definition of Lie algebra homology \cite{MR0874337}.

\begin{theorem}\label{th:deg0}
Homology of the species $C_\bullet^{CE}(\calL)$ is concentrated in degree zero. Moreover, we have
 \[
\dim H_0^{CE}(\calL)(n)=
\begin{cases}
 (n-1)^n,\quad n\ge 2,\\ 
 \quad\,\,\, 0, \qquad\,\, \text{ otherwise,}
\end{cases}
 \]
and $H_0^{CE}(\calL)$ is identified with the species of endofunctions without fixed points. 
\end{theorem}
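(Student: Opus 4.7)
My approach is based on the observation that $\calL$ is, as a dgla, quasi-isomorphic to an abelian dgla concentrated in a single degree. The underlying chain complex of $\calL$ is $\RT \xrightarrow{s^{-1}\Div} s^{-1}\Cyc(\RT)$ with $\Div$ injective by Proposition~\ref{prop:inj}. Decomposing $\Cyc(\RT) = \Cyc_1(\RT) \oplus \Cyc_{\geq 2}(\RT)$ into length-$1$ and length-$\geq 2$ directed cycles of rooted trees, and writing $\Div = \tau + \Div_0$ where $\tau \colon \RT \to \Cyc_1(\RT)$ is the isomorphism attaching a loop at the root of a tree, one obtains a second direct-sum decomposition $\Cyc(\RT) = \Div(\RT) \oplus \Cyc_{\geq 2}(\RT)$. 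This makes it immediate that $H_0(\calL) = \ker \Div = 0$ and $H_{-1}(\calL) \cong s^{-1}\Cyc_{\geq 2}(\RT)$.

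The crucial point is that the natural inclusion $i \colon s^{-1}\Cyc_{\geq 2}(\RT) \hookrightarrow s^{-1}\Cyc(\RT) \subset \calL$ is not only a chain quasi-isomorphism but also a morphism of dglas: the source is abelian, while on the target the Lie bracket of any two elements of $s^{-1}\Cyc(\RT)$ vanishes, as directly read off the defining formula $[g_1 + s^{-1}m_1, g_2 + s^{-1}m_2] = [g_1, g_2] + s^{-1}(g_1(m_2) - g_2(m_1))$. Therefore $\calL$ is quasi-isomorphic as a dgla to $s^{-1}\Cyc_{\geq 2}(\RT)$ placed in degree $-1$ with zero bracket and zero differential.

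Since the Chevalley--Eilenberg functor preserves quasi-isomorphisms of dglas (in characteristic zero, via Poincar\'e--Birkhoff--Witt), this reduces the computation to $C^{CE}_\bullet(s^{-1}\Cyc_{\geq 2}(\RT))$. For an abelian dgla in a single degree with zero differential, the CE complex is just $\Com^c(sM)$ with zero differential; here $sM = \Cyc_{\geq 2}(\RT)$ is concentrated in (even) degree $0$, so $\Com^c(sM) = S(\Cyc_{\geq 2}(\RT))$, all in degree $0$. Hence $H^{CE}_\bullet(\calL) \cong S(\Cyc_{\geq 2}(\RT))$, concentrated in degree $0$. To conclude, I would identify $S(\Cyc_{\geq 2}(\RT))$ with the species of endofunctions without fixed points via the classical bijection between endofunctions and disjoint unions of connected endofunctions (directed cycles of rooted trees): requiring no $1$-cycles amounts to requiring no fixed points, and the count $(n-1)^n$ on an $n$-element set is immediate from there being $n-1$ possible images for each of the $n$ points. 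The one subtle step is verifying that $i$ is a dgla quasi-isomorphism, which ultimately reduces to the explicit direct-sum decomposition of $\Cyc(\RT)$ established at the start.
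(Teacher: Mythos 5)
Your proof is correct, and it reaches the paper's answer by a genuinely different (though closely related) route. The paper bigrades $\Com^c(s\calL)$ by the number of $\Cyc(\RT)$-factors, runs the column-filtration spectral sequence of the resulting double complex, and uses the injectivity of $\Div$ (Proposition~\ref{prop:inj}) together with the K\"unneth formula to see that $E^1$ is $S^p(\mathrm{coker}\Div)$ concentrated on the antidiagonal, whence degeneration and $H_0\cong\Com(\mathrm{coker}\Div)$. You instead upgrade the same injectivity to an explicit splitting $\Cyc(\RT)=\Div(\RT)\oplus\Cyc^+(\RT)$ (since $\Div=\tau+\Div_0$ with $\tau$ an isomorphism onto the loop component, the image of $\Div$ is a graph over that component and hence a complement of $\Cyc^+(\RT)$), observe that $s^{-1}\Cyc^+(\RT)$ is an abelian sub-dgla included quasi-isomorphically into $\calL$, and invoke the quasi-isomorphism invariance of $C_\bullet^{CE}$ in characteristic zero. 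What your route buys is conceptual transparency: the answer $S(\Cyc^+(\RT))$ is literally the Chevalley--Eilenberg complex of the abelian replacement, with no residual differential to analyze, and the identification with fixed-point-free endofunctions is immediate. What it costs is the black box of homotopy invariance of $C_\bullet^{CE}$, whose standard proof is exactly the word-length filtration spectral sequence that the paper runs by hand, so the two arguments are the same computation packaged differently. Two small points worth tightening: that invariance is established via the word-length filtration rather than ``via Poincar\'e--Birkhoff--Witt''; and $\Com^c$ here is the reduced (non-counital) symmetric coalgebra, so $H_0^{CE}(\calL)$ is spanned by \emph{non-empty} sets of cycles of length at least two, which is what makes the answer vanish outside $n\ge 2$.
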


\begin{proof}
If one places 
 \[
S^i(s\RT)\otimes S^j(\Cyc(\RT))\subset \Com^c(s\calL)
 \]
in bi-degree $(i+j,-j)$, the maps $d_1^{CE}$ and $d_2^{CE}$ are of degree $(0,-1)$ and $(-1,0)$ respectively, making $C_\bullet^{CE}(\calL)$ into a double complex $C_{\bullet,\bullet}^{CE}$. We note that the column filtration on the direct sum total complex $\mathrm{Tot}^{\oplus}C_{\bullet,\bullet}^{CE}$ is bounded below and exhaustive, so by the Classical Convergence Theorem \cite[Th.~5.5.1]{MR1269324} the corresponding spectral sequence $\{{}^IE^r_{p,q}\}$ converges to $H_\bullet(\mathrm{Tot}^{\oplus}C_{\bullet,\bullet}^{CE})$, which is precisely the homology of the Chevalley--Eilenberg complex we are interested in.    

According to Proposition \ref{prop:inj}, the divergence map is injective on $\RT$. Moreover, it is clear that $\mathrm{coker}\Div$ has a basis of cycles of length at least two. It remains to apply the K\"unneth formula to conclude that  
 \[
{}^IE^1_{p,q}=
\begin{cases}
S^p(\mathrm{coker}\Div), \quad q=-p,\\
\qquad 0, \qquad  \,\,\, \text{ otherwise}.
\end{cases}
 \]
This immediately implies that our spectral sequence abuts on the second page, giving us a species isomorphism
 \[
H_0(\calL)\cong \Com(\mathrm{coker}\Div),
 \]
and we conclude by noting that a non-empty set of cycles of rooted trees of length at least two is precisely an endofunction without fixed points.
\end{proof} 

Examining the proof of this result, we note that it crucially relies on the property that the divergence map is injective (one can also compute the homology easily if a cocycle is surjective, see, for example, \cite[Th.~4.15]{dotsenko2024stablehomologyliealgebras}). As we saw above, there is a less trivial reduced divergence map $\Div_0$, which is also a cocycle and hence also gives rise to a differential graded Lie algebra
 \[
\tilde\calL:=\RT\stackrel{s^{-1}\Div_0}{\longrightarrow}s^{-1}\Cyc^+(\RT),
 \]
where we use the codomain $\Cyc^+(\RT)$ of cycles of length strictly greater than one. 

\begin{theorem}\label{th:deg01}
Homology of the species $C_\bullet^{CE}(\tilde\calL)$ is concentrated in degrees zero and one. Moreover, we have 
 \[
\dim H_k^{CE}(\tilde\calL)(n)=
\begin{cases}
 (n-2)^n,\quad k=0, n\ge 3,\\ 
 \quad\,\,\, 1, \qquad\,\,\,\, k=1, n=1, \\
 \quad\,\,\, 0, \qquad\,\, \text{ otherwise.}
\end{cases}
 \]
\end{theorem}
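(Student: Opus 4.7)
My plan is to mimic the spectral sequence argument of Theorem~\ref{th:deg0}, using Theorem~\ref{th:div0} to handle the nontrivial kernel of $\Div_0$, and to complete the dimension count via an Euler characteristic identity. As in that proof, place $S^i(s\RT)\otimes S^j(\Cyc^+(\RT))$ in bi-degree $(i+j,-j)$, so that the tensorial extension of $\Div_0$ has bi-degree $(0,-1)$ and the bracket extension has bi-degree $(-1,0)$; the column filtration on the total complex is bounded below and exhaustive, giving a spectral sequence $\{{}^I E^r_{p,q}\}$ converging to $H^{CE}_\bullet(\tilde\calL)$. By the K\"unneth formula in characteristic zero together with $\ker\Div_0=\Lie$ from Theorem~\ref{th:div0}, one obtains
\[
{}^I E^1 \cong \Com^c\bigl(s\Lie\oplus \mathrm{coker}\,\Div_0\bigr),
\]
and the induced bracket differential identifies $E^1$ with the Chevalley--Eilenberg complex of the Lie algebra in species $\Lie$ acting by derivations on the symmetric algebra $S(\mathrm{coker}\,\Div_0)$ via the inclusion $\Lie\subset\RT$.

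Since $\Lie$ is the free Lie algebra in species on a single species-theoretic generator, the trivial module has a length-two Koszul resolution over $U(\Lie)$, and therefore $H^{CE}_n(\Lie;M)=0$ for every Lie module $M$ whenever $n\ge 2$. Consequently $E^2$ is concentrated in total homological degrees zero and one, with $E^2_{0}=S(\mathrm{coker}\,\Div_0)_{\Lie}$ and $E^2_{1}=\ker\bigl(\bullet\otimes S(\mathrm{coker}\,\Div_0)\to S(\mathrm{coker}\,\Div_0)\bigr)$. The spectral sequence collapses at $E^2$: any higher differential preserves the arity and decreases the total degree by one, but at arity $n=1$ the degree-zero piece vanishes because $\mathrm{coker}\,\Div_0(1)=0$, while at each arity $n\ge 2$ the degree-one piece itself vanishes, as one verifies by showing that the bracket-with-$\bullet$ map is injective in these arities. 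Moreover the generator of $E^2_{1}(1)$ is the unit in $S^0(\mathrm{coker}\,\Div_0)(0)=\k$, giving $\dim H^{CE}_1(1)=1$.

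For the remaining dimensions, combine this concentration with an Euler characteristic computation at the level of the whole CE complex. Using the tree function $T(z)=\sum_n n^{n-1}z^n/n!$, which satisfies $T=ze^{T}$, and the exponential generating series $-\log(1-T(z))-T(z)$ of $\Cyc^+(\RT)$, the standard graded-symmetric formalism gives
\[
\sum_n \chi(n)\,\frac{z^n}{n!} = \exp\bigl(-\log(1-T(z)) - 2T(z)\bigr) = \frac{e^{-2T(z)}}{1-T(z)},
\]
and Lagrange inversion identifies this series with $\sum_n (n-2)^n\,z^n/n!$. Since $H^{CE}_\bullet(\tilde\calL)$ is concentrated in degrees zero and one, $\chi(n)=\dim H^{CE}_0(n)-\dim H^{CE}_1(n)=(n-2)^n$, which combined with $\dim H^{CE}_1(1)=1$ and the vanishing statements above yields the claimed dimensions.

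The main obstacle is the injectivity of the bracket-with-$\bullet$ action on $S(\mathrm{coker}\,\Div_0)$ in arities $n\ge 4$ (it is trivially satisfied for $n\le 3$ since the source vanishes there): this is the technical heart of the argument and most likely requires a careful combinatorial description of the $\Lie$-module structure of $\mathrm{coker}\,\Div_0\subset\Cyc^+(\RT)$, perhaps via a filtration by cycle length or an explicit retraction.
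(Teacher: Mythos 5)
Your overall strategy is the same as the paper's: the identical bicomplex and column filtration, the identification of ${}^IE^1$ with the Chevalley--Eilenberg complex of $\Lie$ with coefficients in $S^\bullet(\mathrm{coker}\,\Div_0)$ via Theorem \ref{th:div0}, and an Euler characteristic computation to extract the dimensions (your Lagrange inversion computation of $e^{-2T}/(1-T)$ is equivalent to the Abel identity used in the paper, and is correct). Your observation that the free Lie algebra on one generator has homological dimension one already yields the concentration of the homology in total degrees zero and one, which is a slightly more economical route to the first claim than the paper takes.

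However, there is a genuine gap, and you have located it yourself: to pin down $H_1$ (and hence, via the Euler characteristic, $H_0$) you must prove that $H_1^{CE}(\Lie, S^{p}(\mathrm{coker}\,\Div_0))=0$ for $p\ge 1$, equivalently that the action of the one-vertex tree on $S^p(\mathrm{coker}\,\Div_0)$ is injective in arities $n\ge 4$; you call this ``the technical heart'' and leave it unproved. Without it, ${}^IE^2$ may have contributions in total degree one in arities $n\ge 2$, the collapse at $E^2$ is not justified, and the Euler characteristic alone cannot separate $\dim H_0(n)$ from $\dim H_1(n)$, so neither dimension formula follows. The paper fills exactly this hole by a module-theoretic argument rather than a combinatorial one: using Proposition \ref{prop:combPLMC} it identifies $\mathrm{coker}\,\Div_0\cong \Div(\id)U_{\Lie}(\RT)\oplus(\Cyc/\Com)(\RT)$, then invokes Chapoton's theorems that the Schur functors $\mathbb{S}^\lambda(\RT)$ are projective $U_{\Lie}(\RT)$-modules and that $\RT$ is free as a Lie algebra, so that $U_{\Lie}(\RT)$ is a free associative algebra, all of its projective modules are free (Cohn), and $\mathrm{coker}\,\Div_0$ is therefore a free $U_{\Lie}(\RT)$-module, hence a free $U_{\Lie}(\Lie)$-module; the required vanishing of higher homology with coefficients in $S^p(\mathrm{coker}\,\Div_0)$ follows. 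Your proposed ``filtration by cycle length or explicit retraction'' is not carried out, so as written the proof is incomplete at its most essential point.
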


\begin{proof}
We start in the same way as in Theorem \ref{th:deg0}, placing 
 \[
S^i(s\RT)\otimes S^j(\Cyc^+(\RT))\subset \Com^c(s\calL)
 \]
in bi-degree $(i+j,-j)$, and considering the column filtration on the direct sum total complex $\mathrm{Tot}^{\oplus}C_{\bullet,\bullet}^{CE}$. 

According to Theorem \ref{th:div0}, the chain complex 
 \[
\RT\stackrel{s^{-1}\Div_0}{\longrightarrow}s^{-1}\Cyc^+(\RT)
 \]
has the homology 
 \[
\Lie\stackrel{0}{\longrightarrow}s^{-1}\mathrm{coker}\Div_0.
 \]
Moreover, in the general situation where we have a Lie algebra $\mathfrak{g}$, a $\mathfrak{g}$-module $M$, and a $1$-cocycle $\phi\colon \mathfrak{g}\to M$, there is a well defined action of the Lie algebra $\ker(\phi)$ on the quotient $M/\phi(\mathfrak{g})=\mathrm{coker}(\phi)$, and we have 
 \[
{}^IE^1_{\bullet,\bullet}\cong C_\bullet^{CE}(\Lie,S^\bullet(\mathrm{coker}\Div_0)),
 \] 
the Chevalley--Eilenberg complex computing homology of $\Lie$ with coefficients in the symmetric tensors of the module $\mathrm{coker}\Div_0$. Since the Lie algebra $\Lie$ is free, we have
 \[
{}^{I}E^2_{p,0}=H_{p}^{CE}(\Lie)=
\begin{cases}
\qquad \k, \qquad p=1,\\
\qquad 0, \qquad  \, \text{ otherwise}.
\end{cases}
 \]
Let us determine ${}^{I}E^2_{p,q}=H_{p+q}^{CE}(\Lie,S^{-q}(\mathrm{coker}\Div_0))$ for $q<0$. For that, we note that, as we remarked earlier, the image of $\Div_0$ is contained in 
 \[
\PLMC(-,\varnothing,\m)\cong \Com(\RT)\cong \Omega^1_{\Lie}(\RT),
 \] 
so it follows immediately that
 \[
\mathrm{coker}\Div_0\cong \Div(\id)U_{\Lie}(\RT)\oplus (\Cyc/\Com)(\RT).  
 \] 

Recall that Chapoton proved that for each non-empty Young diagram $\lambda$, the Schur functor $\mathbb{S}^\lambda(\RT)$ is projective as a $U_{\Lie}(\RT)$-module \cite[Prop.~4.3]{MR2682539}. In fact, using the fact that $\RT$ is free as a Lie algebra \cite[Th.~6.2]{MR2682539}, one sees  that $U_{\Lie}(\RT)$ is a free associative algebra, and hence all its projective modules (in fact, all submodules of free modules) are necessarily free \cite{MR118743}. Thus, $\mathrm{coker}\Div_0$ is a free $U_{\Lie}(\RT)$-module. Since $\Lie$ is a Lie subalgebra of $\RT$, $U_{\Lie}(\RT)$ is a free $U_{\Lie}(\Lie)$-module, and hence $\mathrm{coker}\Div_0$ is a free $U_{\Lie}(\Lie)$-module. It follows that for $q<0$ the homology of $\Lie$ with coefficients in $S^p(\mathrm{coker}\Div_0)$ is concentrated in degree zero, and, if we denote that homology by $\calM_p$,  we have
 \[
{}^{I}E^2_{p,q}=
\begin{cases}
\quad\,\,\, \calM_p, \qquad q=-p,\\
\qquad 0, \qquad  \, \text{ otherwise}.
\end{cases}
 \]
This already proves the first claim of the theorem: ${}^{I}E^2_{1,0}$ contributes to the homological degree one in $C_\bullet^{CE}(\tilde\calL)$, and all components ${}^{I}E^1_{p,-p}$, contribute to the homological degree zero in $C_\bullet^{CE}(\tilde\calL)$. 

To prove the second claim, we shall show that the exponential generating function of the Euler characteristics of $C_\bullet^{CE}(\tilde\calL)$ is equal to 
$\sum_{n\ge 1}\frac{(n-2)^n}{n!}t^n$. Since the homology of degree one is one-dimensional, and the Euler characteristic in arity one is $(1-2)^1=-1$, everything else in the Euler characteristics account for the homological degree zero. If we temporarily add the ground field as $C_0^{CE}(\tilde\calL)$, the Chevalley--Eilenberg complex becomes
 \[
C_\bullet^{CE}(\tilde\calL)\cong S(s\RT)\otimes S(\Cyc^+(\RT)),
 \]
where $S(\RT)$ is the species of forests of rooted trees, and $S(\Cyc^+(\RT))$ is the species of endofunctions without fixed points, giving the Poincaré series 
 \[
\left(1+\sum_{n\ge 1}\frac{(n+1)^{n-1}}{n!}t^n\right)^{-1}\left(1+\sum_{n\ge 1}\frac{(n-1)^n}{n!}t^n\right).
 \]
To show that this latter is equal to 
 \[
1+\sum_{n\ge 1}\frac{(n-2)^n}{n!}t^n,
 \]
we may equivalently prove that
 \[
\left(1+\sum_{n\ge 1}\frac{(n-2)^n}{n!}t^n\right)\left(1+\sum_{n\ge 1}\frac{(n+1)^{n-1}}{n!}t^n\right)=\left(1+\sum_{n\ge 1}\frac{(n-1)^n}{n!}t^n\right),
 \] 
or
 \[
(n-1)^n=\sum_{k=0}^n\binom{n}{k}(k-2)^k(n+1-k)^{n-1-k}.
 \] 
This is obtained from the formula 
 \[
(x+y)^n=\sum_{k=0}^n\binom{n}{k}x(x-(n-k)z)^{n-k-1}(y+(n-k)z)^{k}
 \] 
obtained by a change of variable in the classical Abel identity, see, for example, \cite[Exercise 5.31c]{MR4621625}, by setting $z=-1$, $x=1$, $y=n-2$. 
\end{proof}

Note that, while the dimension formula in Theorem \ref{th:deg01} resembles that of Theorem \ref{th:deg0}, qualitatively the two results are very different, in that the $S_n$-module $H_0^{CE}(\tilde\calL)(n)$ of dimension $(n-2)^n$ is not a permutation representation (i.e., it does not come from an action on a finite set). In view of Corollary \ref{cor:hom-arom} below that relates the homology of $C_\bullet^{CE}(\tilde\calL)$ to the homology of the divergence-free aromatic bicomplex, we believe that this representation deserves further study. We note that its $S_n$-character admits an elegant combinatorial formula 
 \[
\chi(H_0^{CE}(\tilde\calL)(n),\sigma)=\prod_{k=1}^n (-2+\sum_{d\mid k} da_d)^{a_k}
 \]
for each permutation $\sigma$ whose decomposition in disjoint cycles has $a_k$ cycles of length $k$ ($k=1,\ldots,n$). This can be proved similarly to the dimension formula above, using the known character formulas for endofunctions and forests \cite{MR1038365,MR927766} and the Abel identity. 

\subsection{The aromatic bicomplex and the Chevalley--Eilenberg complex}

We already saw in Section \ref{sec:proofIserles} that for a Schur functor $\calA$, the composite product $\calA\circ\k$ is the vector space with a basis of unlabelled $\calA$-structures. To account for the presence of black and white vertices in the aromatic bicomplex, we shall consider the composite product with the acyclic two-term complex $K_\bullet:=\k\to\k$. Since the symmetry isomorphisms in the symmetric monoidal category of chain complexes use the Koszul rule of signs, the composite product can be computed by the formula  
 \[
\calA\circ K_\bullet=\bigoplus_{n\ge 0}\calA(\{1,\ldots,n\})^{S_n}\bigoplus_{p+q=n}\calS(\{1,\ldots,n\})\otimes_{S_p\times S_q}(\mathrm{triv}_p\otimes\mathrm{sgn}_q),
 \]
giving the vector space with a basis of two-coloured $\calA$-structures whose black labels are indistingushable black labels and whose white labels are distinguishable white labels but the elements are skew-symmetric with respect to permutations of white labels. Moreover, this vector space comes with a differential $d^K_*$ which is induced by the differential $d^K$ of $K_\bullet$; it is a sum of all possible ways to replace a black label with a new white label. We are now ready to relate the complexes discussed in the previous section to the aromatic bicomplex. 

\begin{proposition}
We have the following isomorphisms of bicomplexes:
\begin{gather*}
(\Omega_{\bullet,\bullet},d^H,d^V)\cong (\Com^c(s\calL)(K_\bullet),d^{CE},d^K_*),\\
(\tilde\Omega_{\bullet,\bullet},d^H,d^V)\cong (\Com^c(s\tilde\calL)(K_\bullet),d^{CE},d^K_*).
\end{gather*}
\end{proposition}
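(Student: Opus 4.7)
The plan is to exhibit both sides as the same chain bicomplex by first identifying the underlying graded species and then checking that each pair of differentials agrees on a spanning set of elements.

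First I would identify the underlying graded species. The coalgebra $\Com^c(s\calL)$ is the free cocommutative coalgebra on $s\RT\oplus\Cyc(\RT)$; since $s\RT$ sits in internal degree $1$, the Koszul sign rule forces tree components to be antisymmetric while cycle components remain symmetric. Applying the composite product $\circ K_\bullet$ further separates each vertex label into a ``symmetric'' (black) part and a ``signed'' (white) part. The result is exactly the space of labelled ordered aromatic forests, antisymmetrized both over permutations of tree components and over permutations of white vertices, which is the definition of $\Omega_{\bullet,\bullet}$; the bidegree $(p,q)$ on both sides reads off the number of tree components and the number of white vertices.

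Next I would verify that $d^{CE}$ matches $d^H$ term by term. Adding an edge from the root of the last tree component to some vertex either (a) loops back into that tree, closing it into a cycle, or (b) points into another tree or into a cycle. Case (a) summed over target vertices is the divergence of that tree, which is precisely what $d_1^{CE}$ produces, since the internal differential of $\calL$ is $s^{-1}\Div$. Case (b) summed over target vertices realizes the pre-Lie insertion $T_i\triangleleft T$ or its module analogue (insertion of a tree at a vertex of a cycle); after antisymmetrization over the choice of ``last tree'' this yields the Lie bracket of two trees (respectively, the bracket of a tree and a cycle in $\calL$), which is exactly what $d_2^{CE}$ computes.

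Then I would verify that $d^K_*$ matches $d^V$: by construction $d^K_*$ applies the differential of $K_\bullet$ at one input label at a time, which under the above identification is the operation of recolouring one vertex, summed over all vertices and antisymmetrized across the white labels, i.e.\ the definition of $d^V$. The divergence-free statement is obtained by the same argument applied to $\tilde\calL$ once one observes that passing from $\Div$ to $\Div_0$ and from $\Cyc(\RT)$ to $\Cyc^+(\RT)$ amounts exactly to quotienting by aromatic forests containing at least one cycle of length one, and that this quotient is compatible with all four differentials. The principal obstacle is the bookkeeping of signs: the suspension $s$, the Koszul rule in symmetric tensors, the sign convention in $\circ K_\bullet$, and the two antisymmetrizations in $\Omega_{\bullet,\bullet}$ interact nontrivially and must be brought under a single convention before one can compare $d^H$ with $d_1^{CE}+d_2^{CE}$ on an explicit basis. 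Once these conventions are aligned, the rest of the proof reduces to reading off the definitions and noting that the same combinatorial operations are being performed on both sides.
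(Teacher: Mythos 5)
Your proposal is correct and follows essentially the same route as the paper's own proof: identify the underlying species via the two antisymmetrizations (one from the suspension $s\RT$, one from $K_\bullet$), match $d^V$ with $d^K_*$, and match $d^H$ with $d^{CE}$ by splitting the edge-addition according to whether the new edge lands in the same tree (divergence, i.e.\ $d_1^{CE}$), another tree (pre-Lie product antisymmetrized to the bracket), or a cycle (module action), with the divergence-free case obtained by passing to $\Cyc^+(\RT)$ and $\Div_0$. The paper is similarly brief about the sign bookkeeping you flag, so nothing essential is missing.
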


\begin{proof}
Let us start with the full aromatic bicomplex. As we mentioned above, considering $\Com^c(s\calL)(K_\bullet)$ amounts to considering graphs with several indistinguishable black vertices and several distinguishable white vertices with respect to which the graphs are anti-symmetric. This means that the underlying vector space of $\Com^c(s\calL)(K_\bullet)$ is precisely what we require: the anti-symmetrization over white vertices comes from $K_\bullet$ and the anti-symmetrization over trees comes from the suspension $s\RT$ in 
 \[
\Com^c(s\calL)=\Com^c(s\RT\to\Cyc(\RT)).
 \]  
The differential of $K_\bullet$ clearly induces the vertical differential~$d^V$. Moreover, the differential $d^{CE}$ does correspond to $d^H$. To see that, we note that the latter computes all ways of connecting the root to some vertex; connecting it to a vertex of another tree amounts to computing the pre-Lie product, of which the anti-symmetrization gives the Lie bracket of rooted trees, connecting it to a vertex of a cycle of rooted trees amounts to computing the right pre-Lie (that is, Lie) action of trees on cycles of rooted trees, which also contributes to the Lie bracket of $\calL$, and connecting it to a vertex of the same tree amounts to computing the divergence which corresponds to the differential of $\calL$. 
The argument for the divergence-free aromatic bicomplex is analogous: passing to the quotient $\tilde\Omega_{\bullet,\bullet}$ corresponds to replacing the species $\Cyc(\RT)$ by the species $\Cyc^+(\RT)$, and all other steps of the proof remain the same.    
\end{proof}

We use this statement to obtain new proofs of some of the key results of \cite{MR4624837}, namely Theorem 2.9 and Theorem 2.10 of that paper. 

\begin{corollary}\label{cor:hom-arom}\leavevmode
\begin{enumerate}
\item Homology of the differential $d^H$ of the aromatic bicomplex is concentrated in bi-degrees $(0,\bullet)$.
\item Homology of the differential $d^H$ of the divergence-free aromatic bicomplex is concentrated in bi-degrees $(0,\bullet)$ and $(1,\bullet)$; in the latter degrees, the homology is spanned by the one-vertex trees (with either black or white vertex).
\item Homology of the differential $d^V$ of both the aromatic bicomplex and the divergence-free aromatic bicomplex is zero.
\end{enumerate}
\end{corollary}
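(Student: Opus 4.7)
The plan is to transport each of the three claims across the isomorphism of bicomplexes provided by the preceding proposition, reducing everything to statements about the Chevalley--Eilenberg complexes $\Com^c(s\calL)(K_\bullet)$ and $\Com^c(s\tilde\calL)(K_\bullet)$ with their two commuting differentials $d^{CE}$ and $d^K_*$. Under that identification, $d^H$ becomes $d^{CE}$, $d^V$ becomes $d^K_*$, and the aromatic bi-degree $(p,q)$ coincides with (CE-degree, $K$-degree), since rooted tree components contribute via $s\RT$ in homological degree one, while cycle components sit in homological degree zero inside $s\calL$.

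For parts (1) and (2) I would compute $d^{CE}$-homology first inside $\Com^c(s\calL)$ (respectively $\Com^c(s\tilde\calL)$) and only afterwards pass to the composite product with $K_\bullet$. The key point is that in characteristic zero the Schur functor construction $\calB\mapsto\calB(K_\bullet)$ is exact in the species variable (each tensor power $K_\bullet^{\otimes n}$ is free over $\k$, and $S_n$-coinvariants are exact on $\k[S_n]$-modules), so the $d^{CE}$-homology of $\calB(K_\bullet)$ equals $H^{CE}(\calB)(K_\bullet)$ as a bi-graded species, carrying the residual differential $d^K_*$. Theorem~\ref{th:deg0} tells us $H^{CE}(\calL)$ is concentrated in CE-degree zero, which forces $H(\Omega_{\bullet,\bullet}, d^H)$ to live in bi-degrees $(0,\bullet)$, giving (1). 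Theorem~\ref{th:deg01} tells us $H^{CE}(\tilde\calL)$ is concentrated in degrees zero and one, with the degree-one component one-dimensional, supported in arity one, and spanned by the one-vertex tree; composing with $K_\bullet$ spreads this one-dimensional class over $K_\bullet^{\otimes 1} = K_\bullet$, producing exactly the two classes represented by the one-vertex tree with a black, respectively white, vertex, yielding (2).

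For part (3) I would simply observe that $K_\bullet = \k\xrightarrow{\mathrm{id}}\k$ is an acyclic (in fact contractible) complex, so each tensor power $K_\bullet^{\otimes n}$ with $n\ge 1$ is contractible, and taking $S_n$-coinvariants in characteristic zero preserves this. Consequently, for any species $\calB$ and any positive arity, the complex $(\calB(K_\bullet), d^K_*)$ is acyclic. Applying this to $\calB=\Com^c(s\calL)$ and $\calB=\Com^c(s\tilde\calL)$ (working in the arities relevant to the aromatic bicomplex, i.e., excluding the empty forest) immediately yields (3).

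The main obstacle, in my view, is the careful bookkeeping of the bi-grading, in particular verifying that the CE-homological degree on $\Com^c(s\calL)$ really corresponds to the tree-component index $p$ of the aromatic bicomplex (rather than, say, to the total number of connected components), and tracing through how the single one-vertex generator of $H_1^{CE}(\tilde\calL)$ produces exactly the two classes asserted in (2) after composition with $K_\bullet$. Once these identifications are firmly in place, the three parts follow from, respectively, an exact-functor argument, the same argument combined with the explicit description of $H_1^{CE}(\tilde\calL)$, and the acyclicity of $K_\bullet$.
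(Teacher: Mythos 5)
Your proposal is correct and follows essentially the same route as the paper: both reduce all three claims to the identification of the bicomplexes with $\Com^c(s\calL)\circ K_\bullet$ and $\Com^c(s\tilde\calL)\circ K_\bullet$ and then apply the K\"unneth formula for symmetric sequences (which you re-derive via exactness of Schur functors in characteristic zero, where the paper simply cites it) together with Theorems~\ref{th:deg0} and~\ref{th:deg01} for $d^H$, and the contractibility of $K_\bullet$ for $d^V$. The bookkeeping you flag as the main obstacle is indeed the only point requiring care, and your identification of the CE-degree with the number of tree components is the correct one.
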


\begin{proof}
All of these results amount to applying the K\"unneth formula for symmetric sequences~\cite[Prop.~6.2.5]{MR2954392} to 
 \[
\Com^c(s\calL)(K_\bullet)=\Com^c(s\calL)\circ K_\bullet\text{ and } \Com^c(s\tilde\calL)(K_\bullet)=\Com^c(s\tilde\calL)\circ K_\bullet
 \]  
Indeed, Theorem \ref{th:deg0} implies that we have
\begin{multline*}
H_\bullet(\Omega_{\bullet,\bullet},d^H)\cong H_\bullet(\Com^c(s\calL)\circ K_\bullet,d^{CE})\cong\\ 
H_\bullet(\Com^c(s\calL),d^{CE})\circ K_\bullet\cong H_0(\Com^c(s\calL),d^{CE})\circ K_\bullet
\end{multline*}
and hence is concentrated in bi-degrees $(0,\bullet)$. Furthermore, we have 
 \[
H_\bullet(\Omega_{\bullet,\bullet},d^V)\cong 
H_\bullet(\Com^c(s\calL)\circ K_\bullet, d^K_*)\cong \Com^c(s\calL)\circ H_\bullet(K_\bullet,d^K)=0.
 \]
Similarly, Theorem \ref{th:deg01} implies that we have
 \[
H_\bullet(\tilde\Omega_{\bullet,\bullet},d^H)\cong H_\bullet(\Com^c(s\tilde\calL)\circ K_\bullet,d^{CE})\cong
H_\bullet(\Com^c(s\tilde\calL),d^{CE})\circ K_\bullet,
 \]
and hence the homology is concentrated in bi-degrees $(0,\bullet)$ and $(1,\bullet)$. Moreover, $H_1(\Com^c(s\tilde\calL),d^{CE})$ is supported at arity one and is one-dimensional, implying the claim about one-vertex trees. 
In the same way, we have 
 \[
H_\bullet(\tilde\Omega_{\bullet,\bullet},d^V)\cong 
H_\bullet(\Com^c(s\tilde\calL)\circ K_\bullet,d^K_*)\cong \Com^c(s\tilde\calL)\circ H_\bullet(K_\bullet,d^K)=0.
 \]
\end{proof}

We recall that the second part of this latter result is the key step in a complete classification of volume-preserving aromatic Butcher series methods, see \cite[Th.~4.17]{MR4624837}. 

\section*{Acknowledgements} The first author is indebted to Yvain Bruned who asked him about possible operadic context for aromatic Butcher series and gave useful feedback on the draft version of the paper and to Guodong Zhou for hospitality at ECNU in Shanghai where a large part of this paper was written.  
We gratefully acknowledge useful discussions of the aromatic bicomplex we had with Adrien Laurent during the workshop of ANR CARPLO in Besse in October 2024. We also thank Dominique Manchon for organizing that event. 

\section*{Funding}

The first author is supported by the ANR project HighAGT (ANR-20-CE40-0016), and by Institut Universitaire de France. The second author is funded by a postdoctoral fellowship of the ERC Starting Grant ``Low Regularity Dynamics via Decorated Trees'' (LoRDeT) of Yvain Bruned (grant agreement No.\ 101075208).

\printbibliography
\appendix

\section{Further consequences of Theorem \ref{th:div0}}\label{app:conseqdiv0}

\subsection{Lie elements in free pre-Lie algebras}

Let $\PL(V)$ be the free pre-Lie algebra generated by $V$. Markl proved in \cite{MR2325698} that the Lie subalgebra 
 \[
\Lie(V)\subset\PL(V)
 \] 
can be described explicitly as the kernel of a certain map 
$\PL(V)\to\PL(V\oplus\k)$.  
Our work suggests the following other criteria for Lie elements in free pre-Lie algebras. 

\begin{proposition}
The space of Lie elements in the free pre-Lie algebra $\PL(V)$ is equal to the kernel of each of the maps
\begin{gather*}
\Div_0\colon\RT(V)\to\Cyc(\RT)(V),\\
\mathrm{Sym}\circ\Div_0\colon\RT(V)\to\Com(\RT)(V).
\end{gather*}
It is also the space of all elements $f$ such that  
 \[
\pi\d_{\Lie}(f)=0,
 \]
where $\d_{\Lie}$ is the universal derivation $\RT(V)\to \Omega^1_{\Lie}(\RT(V))$, and $\pi$ is the canonical projection to the quotient of the $U_{\Lie}(\RT(V))$-module $\Omega^1_{\Lie}(\RT(V))$ by the submodule generated by $\d_{\Lie}(V)$. 
\end{proposition}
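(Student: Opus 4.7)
The plan is to deduce each of the three characterizations from Theorem~\ref{th:div0} via the passage from operads to their free algebras on a vector space.

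For characterization one, observe that $\RT(V)=\PL(V)$ and that the inclusion $\Lie(V)\subset\PL(V)$ is the evaluation at $V$ of the operadic inclusion $\Lie\subset\RT$. Since $\k$ has characteristic zero, the Schur functor $\calO\mapsto\calO(V)$ is exact and hence commutes with kernels. Applying it to the equality $\ker(\Div_0)=\Lie$ of Theorem~\ref{th:div0} gives the desired identity.

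For characterization three, the proof of Theorem~\ref{th:div0} identifies $\Div_0$, inside $\PLMC$, with the composite of the universal derivation $\RT\to\Omega^1_{\Lie}(\RT)$ and the quotient by the right $U_{\Lie}(\RT)$-submodule generated by $\Div(\id)=\d_{\Lie}(\id)$. Evaluating this factorization at $V$, under which the operadic generator $\id$ becomes the space $V$ of generators, yields precisely the map $\pi\d_{\Lie}\colon\RT(V)\to\Omega^1_{\Lie}(\RT(V))/\langle\d_{\Lie}(V)\rangle$ from the statement, which consequently has the same kernel $\Lie(V)$.

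Characterization two requires showing that $\mathrm{Sym}\colon\Cyc(\RT)\to\Com(\RT)$, the natural map forgetting the cyclic order, is injective on the image of $\Div_0$. By Theorem~\ref{th:emb}, that image lies inside the subspace of $\Cyc(\RT)(V)$ spanned by the cyclic braces $\langle T_1,\ldots,T_s\rangle$; by construction each cyclic brace has, as its component on $s$-cycles, the complete $S_s$-symmetrization of the $s$-cycle on $T_1,\ldots,T_s$, plus lower-order contributions supported on cycles of strictly fewer blocks. Under $\mathrm{Sym}$ the leading component of $\langle T_1,\ldots,T_s\rangle$ maps to a nonzero scalar multiple of the multiset $\{T_1,\ldots,T_s\}$ in $\Com(\RT)$, while the lower-order terms contribute to multisets of strictly smaller size. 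A triangularity argument on the filtration by number of cycle blocks then yields injectivity of $\mathrm{Sym}$ on the cyclic-braces span, hence on the image of $\Div_0$; combined with characterization one this gives $\ker(\mathrm{Sym}\circ\Div_0)=\Lie(V)$.

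The main obstacle is characterization two: one must verify that the triangularity argument actually goes through, i.e.\ that the leading multisets associated to distinct cyclic braces (for varying $s$ and $(T_1,\ldots,T_s)$) are linearly independent in $\Com(\RT)$. This ultimately rests on the same combinatorial dimension count of $(n+1)^{n-1}$ already used in the proof of Theorem~\ref{th:emb}.
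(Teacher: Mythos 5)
Your proof is correct and follows the same route as the paper, whose own proof is only a two-sentence pointer to Theorem~\ref{th:div0} and the key step of its proof; the value you add is spelling out the one point the paper leaves implicit, namely that $\mathrm{Sym}$ is injective on the image of $\Div_0$ because that image lies in the span of the cyclic braces and $\mathrm{Sym}$ is triangular there. One small correction: the non-leading terms of $\langle T_1,\ldots,T_s\rangle$ are supported on directed cycles with \emph{more} than $s$ blocks (the paper says ``directed cycles of length greater than $s$''), not fewer, so under $\mathrm{Sym}$ the corrections land in multisets of strictly \emph{larger} size; the filtration argument is unaffected, one simply extracts leading terms at the minimal rather than the maximal number of blocks. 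Finally, the linear independence you flag as the main obstacle is automatic and needs no dimension count: the leading multisets $\{T_1,\ldots,T_s\}$ attached to distinct forests are distinct basis vectors of $\Com(\RT)(V)$ (the count $(n+1)^{n-1}$ is only needed to know that the cyclic braces span the whole image of $\PLMC(-,\varnothing;\m)$, which you already invoke from the proof of Theorem~\ref{th:emb}).
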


\begin{proof}
The first criterion is immediate from Theorem \ref{th:div0}. The second one is a literal translation of the key step in the proof of that theorem.
\end{proof}

\subsection{Reduced divergence and operadic twisting}

Let us show that our Theorem \ref{th:div0} gives a slight improvement of a known result about the operad $\PL$. 
In~\cite{MR4709210}, the first author and Khoroshkin computed the homology of the differential graded operad $\Tw(\PL)$, the operadic twisting of the  operad $\PL$ in the sense of Willwacher \cite{MR3348138}. The easiest way to describe that operad and its differential follows the description of \cite{MR4621635} and is as follows. Its  arity $n$ component $\Tw(\PL)(n)$ is spanned by rooted trees with ``normal'' vertices labelled $1$, $\ldots$, $n$ and a certain number of ``special'' vertices labelled~$\alpha$. The differential $d_{\Tw}$ sends each tree~$T$ to the sum of three terms:

\begin{itemize}
\item[-] the sum over all possible ways to split a normal vertex into a normal one retaining the label and a special one, and to connect the incoming edges of that vertex to one of the two vertices thus obtained, so that the term where the vertex further from the root retains the label is taken with the plus sign, and the other term is taken with the minus sign,
\item[-] the sum over all special vertices of $T$ of all possible ways to split that vertex into two special ones, and to connect the incoming edges of that vertex to one of the two vertices thus obtained, taken with the plus sign,
\item[-] grafting the tree $T$ at the new special root, taken with the minus sign, and the sum of all possible ways to create one extra special leaf, taken with the plus sign.
\end{itemize}

It is established in \cite[Th.~5.1]{MR4709210} that the homology of $\Tw(\PL)$ is concentrated in homological degree zero and is isomorphic to the operad $\Lie$. It turns out that our Theorem \ref{th:div0} is related to this result. Namely, the differential $d_{\Tw}$, when evaluated on a tree without special vertices, can be written as a sum of two parts, the terms where the special vertex is the root and all other terms. Trees where the special vertex is the root may be identified with $\bar{U}_{\Lie}(\RT)$. Under this identification as well as the identification $\PLMC(I,\varnothing;\m)\cong \bar{U}_{\Lie}(\RT)(I)$ of Proposition \ref{prop:combPLMC}, the first part of the differential $d_{\Tw}$ is precisely the reduced divergence. Thus, the statement that the kernel of the reduced divergence coincides with the operad $\Lie$ may be viewed as a slightly more precise form of the statement that the kernel of $d_{\Tw}$ coincides with the operad $\Lie$.

\section{A version of our results for graph complexes}\label{app:graphcx}

In \cite{MR3590540}, questions similar to the ones discussed in this paper are discussed in the case of graphs that are not directed. In that case, there are of course no constraints on the combinatorics of graphs: that combinatorics is constrained to trees-and-cycles only when graphs are directed and every vertex has at most one outgoing edge. We shall now discuss an operadic version of some of the results of \cite{MR3590540} which exhibit an interesting parallel with our Theorem \ref{th:div0} and Theorem \ref{th:deg01}.

To be precise, we consider the species $\Graphs$ made of linear combinations of graphs with edges of homological degree $-1$ (note that this prohibits graphs from having multiple edges). We denote by $\CGraphs$ the subspecies made of linear combinations of connected graphs. In our context, there is no natural way to separate graphs by types, and so we consider the single-colour operad $\CGraphs$ with the operad structure given by the same graph insertion as for $\RT$ and $\RTW$. The graphs 
 \[
\vcenter{
\xymatrix@M=4pt@R=4pt@C=20pt{
*+[o][F-]{1}\ar@{-}[r] & *+[o][F-]{2} \\
}} \quad\text{and}\quad 
\vcenter{\xygraph{ 
!{<0cm,0cm>;<0.5cm,0cm>:<0cm,0.5cm>::} 
!~-{@{-}@[|(2.5)]}
!{(0,0) }*+[o][F-]{1}="a"
"a" - @`{p+(2,1),c+(-2,1)} "a"
}} 
 \]
are known \cite{MR4726567} to generate a suboperad of $\CGraphs$ isomorphic to the operad $\calS\Delta\Lie$ encoding shifted Lie algebras equipped with a unary operator of homological degree $-1$ that squares to zero and is a derivation of the shifted Lie bracket, so $\CGraphs$ becomes a shifted dg Lie algebra in species. The graphs that have at least one loop at their vertices form a dg ideal of that dg Lie algebra, and we denote by $\CGraphs^r$ the quotient by that ideal; it is spanned by connected graphs without loops at their vertices. It has its Chevalley--Eilenberg complex 
 \[
 C_\bullet^{CE}(\CGraphs^r)=(\Com^c(\CGraphs^r),d^{CE}=d^{CE}_1+d^{CE}_2),
 \]
where $\Com^c(\CGraphs^r)$ may be naturally identified with the species of all reduced graphs $\Graphs^r$ (note that there is no suspension of $\CGraphs^r$ since we work with shifted Lie algebras), and $d^{CE}$ is the corresponding Chevalley--Eilenberg differential. Moreover, we can identify the \emph{quotient} complex $C_\bullet^{CE}(\CGraphs^r)$ with a \emph{sub}complex of $C_\bullet^{CE}(\CGraphs)$ if we consider all reduced graphs with the differential 
 \[
\left[ 
\vcenter{\xygraph{ 
!{<0cm,0cm>;<0.5cm,0cm>:<0cm,0.5cm>::} 
!~-{@{-}@[|(2.5)]}
!{(0,0) }*+[o][F-]{1}="a"
"a" - @`{p+(2,1),c+(-2,1)} "a"
}},-\right] \quad,
 \]
given by the operadic commutator with the tadpole graph: indeed, the operadic substitution into the tadpole graph is the sum of all possible ways of add one edge, and the commutator with the tadpole graph is the difference of that of all possible substitutions of the tadpole in vertices of the graph, giving precisely the sum of all possible ways of add one edge between two distinct vertices.

Recall that \cite[Prop.~2]{MR3590540} asserts that the homology of the shifted Chevalley--Eilenberg complex $C_\bullet^{CE}(\CGraphs^r)\circ\k$ of unlabelled graphs is one-dimensional and represented by the one-vertex graph. Let us record here an operadic version of this result, which, coincidentally, is almost completely identical to the unlabelled version.

\begin{proposition}
The complex $C_\bullet^{CE}(\CGraphs^r)$ has one-dimensional homology supported at arity one. 
\end{proposition}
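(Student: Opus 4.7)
The plan is to exploit the alternative identification of $C_\bullet^{CE}(\CGraphs^r)$ given in the paragraph preceding the statement: for each finite label set $I$, one realizes the component $C_\bullet^{CE}(\CGraphs^r)(I)$ as the span $\Graphs^r(I)$ of reduced (loop-free) graphs on the vertex set $I$, equipped with the differential given by the operadic commutator with the tadpole, which sends a graph $G$ to the sum of all graphs obtained from $G$ by adjoining one new edge between two distinct vertices of $I$.

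I would then fix an arity $n=|I|$ and let $V_I$ denote the vector space spanned by the $\binom{n}{2}$ possible edges on $I$, placed in homological degree $-1$. The key observation is that, since edges carry odd degree and no multiple edges are allowed, the underlying space $\Graphs^r(I)$ is naturally isomorphic to the graded-commutative exterior algebra $\Lambda^\bullet(V_I)$, with homological grading opposite to the number of edges. Under this isomorphism the differential becomes the left multiplication operator $L_x\colon\omega\mapsto x\wedge\omega$, where $x=\sum_{e\in V_I}e$ is the sum of all possible edges; the identity $L_x^2=0$ is then automatic because $x$ is odd in $\Lambda^\bullet(V_I)$.

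For $n=1$ there are no possible edges, so the complex collapses to a one-dimensional space in homological degree $0$ with zero differential, producing a single homology class represented by the single-vertex graph. For $n\ge 2$ the element $x$ is nonzero, so one can select a linear form $\xi\in V_I^*$ with $\xi(x)=1$---for instance the functional dual to any fixed edge. The plan at this point is to verify that the associated contraction $\iota_\xi$, which is an antiderivation of $\Lambda^\bullet(V_I)$ of homological degree $+1$, provides a contracting homotopy: the familiar exterior-algebra identity $\iota_\xi L_v+L_v\iota_\xi=\xi(v)\,\mathrm{id}$, valid for every $v\in V_I$, extends by linearity to $\iota_\xi L_x+L_x\iota_\xi=\xi(x)\,\mathrm{id}=\mathrm{id}$, so the complex is acyclic in every arity $n\ge 2$.

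Putting the two arity computations together yields that the homology species is one-dimensional and concentrated in arity one, as asserted. The only point demanding vigilance is the sign convention: because edges are placed in odd homological degree, the graded-commutative exterior algebra structure simultaneously implements the antisymmetrization built into $\Com^c(\CGraphs^r)$ and makes the square of $L_x$ automatically vanish, so I do not anticipate a serious obstacle---the proof is essentially the standard computation that left multiplication by a nonzero odd element of an exterior algebra is contractible.
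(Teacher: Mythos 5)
Your proof is correct and is essentially the paper's argument: the paper's contracting homotopy is the signed sum over all ways to remove an edge, giving $d^{CE}h+hd^{CE}=\binom{n}{2}\id$ (nonzero for $n\ge 2$ in characteristic zero), while your $\iota_\xi$ contracts against a single fixed edge and gives the identity on the nose. The exterior-algebra packaging and the choice of one edge rather than the sum of all of them are cosmetic differences (your homotopy is not $S_n$-equivariant, but that is harmless since homology is computed arity by arity).
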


\begin{proof}
Indeed, this complex admits a very simple contracting homotopy $h$ given by the sum, with signs, of all ways to remove one edge from the graph: on the space of $n$-vertex graphs this map satisfies $d^{CE}h+hd^{CE}=\binom{n}{2}\id$, and $\binom{n}{2}\ne 0$ unless $n=1$. 
\end{proof}

Note that the vector space spanned by connected unlabelled graphs is a subcomplex of the full complex $C_\bullet^{CE}(\CGraphs^r)\mathop{\circ}\k$. Recall that \cite[Cor.~3]{MR3590540} the homology of that latter subcomplex is two-dimensional, represented by one- and two-vertex graphs. In this case, the operadic result, reminiscent of our Theorem~\ref{th:div0}, is more subtle.

\begin{proposition}
We have
 \[
H_\bullet(\CGraphs^r,d^{CE})\cong\calS\Lie.
 \]
\end{proposition}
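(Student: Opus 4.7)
The plan is to invoke bar–cobar duality for shifted dg Lie algebras in species, the operadic manifestation of the Koszul duality between $\Com$ and $\calS\Lie$. The preceding proposition provides the key input: the bar complex $C^{CE}(\CGraphs^r)$ has homology $\k \oplus E$, where $E$ is the species concentrated at arity one with $E(\{\ast\}) = \k$ in degree $0$.

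First I would construct a morphism of shifted dg Lie algebras in species $\varphi\colon \calS\Lie \to \CGraphs^r$ sending the unique generator of $\calS\Lie$ (living in arity one, degree zero) to the one-vertex graph $\bullet \in \CGraphs^r(1)$. This is well-defined because $\calS\Lie$ is the free shifted Lie algebra on one arity-one generator with zero differential, and $\bullet$ is a $d$-cycle (there is no distinct pair of vertices to connect by an edge). Applying the bar construction yields $B(\varphi)\colon C^{CE}(\calS\Lie) \to C^{CE}(\CGraphs^r)$. By standard Koszul duality, the bar of the free shifted Lie algebra on $E$ is quasi-isomorphic to the cofree cocommutative coalgebra $\k \oplus E$, so the source has the same homology as the target; since $B(\varphi)$ is the identity on the arity-one generator, it is a quasi-isomorphism.

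Finally I would invoke the bar–cobar adjunction: a morphism of shifted dg Lie algebras in species whose bar is a quasi-isomorphism is itself a quasi-isomorphism. Applied to $\varphi$, this gives $H_\bullet(\CGraphs^r) \cong \calS\Lie$. The hardest step will be justifying the bar–cobar equivalence in our species setting, where $\CGraphs^r$ is concentrated in non-positive homological degrees and is unbounded in arity. This can be handled by working arity by arity: each component $\CGraphs^r(n)$ is finite-dimensional and supported in the degree range between $1-n$ (trees) and $-\binom{n}{2}$ (the complete graph), so the Koszul-duality spectral sequences converge in each fixed arity and the desired quasi-isomorphism statement follows arity-wise.
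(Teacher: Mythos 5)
Your proof is correct in outline but takes a genuinely different route from the paper's. The paper argues by direct induction on arity: for a $d^{CE}$-cycle $\gamma$ of connected $n$-vertex graphs it applies the explicit contracting homotopy $h$ (remove an edge in all possible ways) inside the full complex $\Com^c(\CGraphs^r)$, splits $\binom{n}{2}^{-1}h\gamma$ into its connected and two-component parts, corrects the latter using the inductive hypothesis applied to the symmetric square of the connected complex, and observes that the surviving classes are shifted Lie elements, which persist in homology because trees realize the minimal number of edges. You instead package the same input---the acyclicity of $\Com^c(\CGraphs^r)$ established in the preceding proposition---as the statement that the bar construction of $\CGraphs^r$ has the homology of the bar construction of the free shifted Lie algebra on one arity-one generator, and then descend along bar--cobar duality via the comparison map $\varphi$; this is more conceptual and makes transparent exactly how the previous proposition implies this one. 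The step you must be careful with is the reflection of quasi-isomorphisms by $C^{CE}_\bullet$: this fails for general unbounded dg Lie algebras, and convergence of the word-length spectral sequence in each arity is not by itself sufficient, since an isomorphism of abutments does not force an isomorphism of $E^1$-pages. What saves you is that the bracket strictly increases arity, so in arity $n$ the word-length filtration has length $n$; arguing by induction on $n$, the map induced by $B(\varphi)$ on all filtration quotients of word length at least two is a quasi-isomorphism by the inductive hypothesis and the K\"unneth formula, and the five lemma applied to the filtration then forces the word-length-one quotient $\calS\Lie(n)\to\CGraphs^r(n)$ to be a quasi-isomorphism as well. (This induction is essentially the paper's proof in structured form, so the two arguments are closer than they first appear.) Two minor points: the homology of the bar construction of a free Lie algebra is $\k$ plus the suspended generators, i.e.\ the \emph{trivial} coaugmented coalgebra on $E$ rather than the cofree one (the latter would be one-dimensional in every arity); and your route yields the injectivity of $\calS\Lie\to H_\bullet(\CGraphs^r)$ for free, since the source has zero differential, whereas the paper needs the separate observation that Lie elements sit in the extremal degree and cannot be hit by the differential.
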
   

\begin{proof}
We shall prove the statement by induction on arity of elements. It turns out that the argument of \cite[Cor.~3]{MR3590540} can be simplified in a way that then allows us to adapt it for our purposes. For arity $1$, the statement is trivial. Suppose we proved it in arities not exceeding $n$, and consider an element $\gamma$ that is a linear combination of connected graphs on $n$ labelled vertices satisfying $d^{CE}\gamma=0$. We mentioned above that in the complex of not necessarily connected graphs, there is a contracting homotopy $h$ that removes an edge in all possible ways, so that $d^{CE}h\gamma=\binom{n}{2}\gamma$, and $\gamma$ is the image under $d^{CE}$ of the element $\binom{n}{2}^{-1}h\gamma$ that is a combination of connected graphs and graphs with two connected components; we write 
 \[
\binom{n}{2}^{-1}h\gamma=\nu_1+\nu_2
 \]
according to that distinction. If $\nu_2=0$, then $\gamma$ is in the image of $d^{CE}$ in the complex of connected graphs and so does not contribute to the homology. If $\nu_2\ne 0$, we argue as follows. We represent $d^{CE}=d^{CE}_c+d^{CE}_r$, where $d^{CE}_c$ does not change the number of connected components and $d^{CE}_r$ reduces the number of connected components by one. We must have $d^{CE}_c(\nu_2)=0$. However, the complex of graphs with two connected components equipped with the differential $d^{CE}_c$ is the symmetric square of the complex of connected graphs, and by induction we may write
 \[ 
\nu_2=\ell+d^{CE}_c(\nu_2'),
 \] 
where $\ell$ is a linear combination of products of shifted Lie elements. Then 
 \[
\binom{n}{2}^{-1}h\gamma=\nu_1+\ell+d^{CE}_c(\nu_2')=\nu_1+\ell+d^{CE}(\nu_2')-d^{CE}_r(\nu_2')=\nu_1'+\ell+d^{CE}(\nu_2'),
 \]
where we absorbed new connected elements into $\nu_1'$. Now we have 
 \[
\gamma=\binom{n}{2}^{-1}d^{CE}h\gamma=d^{CE}\nu_1'+d^{CE}\ell.
 \] 
It remains to note that $d^{CE}\ell=d^{CE}_r(\ell)$ is a shifted Lie element, so we proved that $\gamma$ is homologous to a shifted Lie element. Also, these elements have the smallest possible number of edges for a connected graph, so they do not vanish in the homology.
\end{proof}

\section{Another approach to the operad \texorpdfstring{$\PLMC$}{PLMC}}\label{app:colouredGB}

In this section, we explain how one could discover guess the result of Proposition \ref{prop:combPLMC} using Gr\"obner bases for coloured operads and Koszul duality. It is also possible to use results of this section to interpret homology calculations of Section~\ref{sec:dgla} as homology of deformation complexes of certain maps of coloured operads; this viewpoint will be discussed elsewhere.

\begin{proposition}
The coloured operad $\PLMC$ is Koszul. Its Koszul dual is the $\{\o,\m\}$-coloured operad generated by the elements
 \[
\circ\in\PLMC^!(\{1,2\},\varnothing;\o),\quad \bullet\in\PLMC^!(\{1\},\{2\};\m),\quad\delta\in\PLMC^!(\{1\},\varnothing;\m)
 \]
such that 
\begin{gather*}
(a\circ b)\circ c=(a\circ c)\circ b=a\circ(b\circ c)=a\circ(c\circ b),\\
(a\bullet b)\bullet c=(a\bullet c)\bullet b=a\bullet(b\circ c)=a\bullet(c\circ b),\\
\delta(a\circ b)=\delta(a)\bullet b=\delta(b)\bullet a.
\end{gather*}
\end{proposition}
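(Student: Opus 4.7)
The plan is two-pronged: first, to identify $\PLMC^!$ by applying the orthogonal-complement construction to the three defining relations of $\PLMC$, and second, to prove Koszulness by exhibiting a quadratic Gr\"obner basis in the free $\{\o,\m\}$-coloured operad on the generators $\triangleleft$, $\ract$, $\phi$.

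For the first part, the defining relations of $\PLMC$ live in three finite-dimensional components of the free coloured operad $F(V)$, namely $F(V)(\{1,2,3\},\varnothing;\o)$ (pre-Lie), $F(V)(\{1,2\},\{3\};\m)$ (right-module), and $F(V)(\{1,2\},\varnothing;\m)$ ($1$-cocycle). In each component one writes the relation as an explicit vector and takes its orthogonal complement under the standard $S_I\times S_J$-equivariant, colour-preserving pairing that underlies coloured Koszul duality. The pre-Lie relation's orthogonal complement is the familiar Perm-type identity $(a\circ b)\circ c=(a\circ c)\circ b=a\circ(b\circ c)=a\circ(c\circ b)$; the right-module relation's orthogonal complement is the analogous identity for $\bullet$; and a direct computation shows that the $1$-cocycle relation's orthogonal complement is exactly $\delta(a\circ b)=\delta(a)\bullet b=\delta(b)\bullet a$. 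These are precisely the relations stated in the proposition.

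For the second part, I would fix a path-lexicographic admissible monomial order on $F(V)$ refining the colour grading, so that the leading monomials of the three defining relations are $(a_1\triangleleft a_2)\triangleleft a_3$, $(m\ract a_1)\ract a_2$, and $\phi(a_1\triangleleft a_2)$. Overlaps of $\triangleleft$ with itself close up via the well-known quadratic Gr\"obner basis of the pre-Lie operad, and overlaps involving $\triangleleft$ and $\ract$ close up by the analogous analysis for $\RTM$ in the spirit of Willwacher. The new overlaps involve $\phi(a_1\triangleleft a_2)$ colliding with the pre-Lie and module leading monomials, and a short direct calculation shows each resulting S-polynomial rewrites to zero modulo the three relations by means of the cocycle identity. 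The relations therefore form a quadratic Gr\"obner basis, and Koszulness follows by the standard criterion applied in the coloured setting.

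The main obstacle will be bookkeeping the overlaps involving $\phi$: because $\phi$ is a unary generator whose output colour differs from its input colour, it grafts into the pre-Lie pattern in ways that are absent from the single-coloured setting, and each overlap produces a multi-term S-polynomial whose reduction must be chased through all three relations. As an independent consistency check, one may compute the Hilbert series of the proposed $\PLMC^!$ directly from its presentation and verify, together with the Hilbert series of $\PLMC$ read off from Proposition~\ref{prop:combPLMC}, that the Koszul functional equation is satisfied; this simultaneously confirms the presentation of $\PLMC^!$ and supplies dimension-level evidence for the Koszul property of $\PLMC$.
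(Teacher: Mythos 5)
Your first half — computing $\PLMC^!$ by taking orthogonal complements of the three relations componentwise — is exactly what the paper does (it calls this ``a simple direct computation''), and it is fine. The gap is in the second half. You assert that with the generators $\triangleleft$, $\ract$, $\phi$ and leading monomials $(a_1\triangleleft a_2)\triangleleft a_3$, $(m\ract a_1)\ract a_2$, $\phi(a_1\triangleleft a_2)$, the ``overlaps of $\triangleleft$ with itself close up via the well-known quadratic Gr\"obner basis of the pre-Lie operad.'' There is no such citable fact: the quadratic Gr\"obner basis of $\PL$ that exists in the literature (and that the paper actually invokes, via Lemma~2 and Prop.~4 of the reference on freeness theorems) is written in the \emph{changed} generators $[a_1,a_2]$ and $a_1\cdot a_2=a_1\triangleleft a_2+a_2\triangleleft a_1$, with leading terms $[a_1,[a_2,a_3]]$, $a_1\cdot[a_2,a_3]$, $a_1\cdot(a_2\cdot a_3)$; correspondingly the paper's leading terms for the remaining relations are $a_1\ract[a_2,a_3]$ and $\phi([a_1,a_2])$, not the ones you propose. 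Whether your choice of order and leading monomials yields a quadratic Gr\"obner basis is precisely the content that must be proved, and your proposal defers every S-polynomial reduction (``a short direct calculation shows\ldots'') while flagging the $\phi$-overlaps as the ``main obstacle'' without resolving it. The Hilbert-series check at the end is only a necessary condition and cannot close this gap.

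It is also worth noting that the paper's argument avoids overlap computations entirely, which is the real point of its proof. It first computes $\dim\PLMC^!$ directly from the presentation (the $\o$-part is $\Perm$, and the $\m$-components are at most one-dimensional by the relations and at least one-dimensional because every relation is a difference of monomials, so no monomial dies), and then observes that the count of tree monomials avoiding the chosen quadratic leading terms reproduces exactly these dimensions; since that count is always an upper bound that is attained if and only if the quadratic relations form a Gr\"obner basis, Koszulness follows with no reduction of S-polynomials at all. If you want to salvage your route, you should either switch to the paper's generators and import the known Gr\"obner basis of $\PL$, or genuinely carry out and exhibit the overlap computations for your own choice — the paper's closing remark does say other Gr\"obner bases exist, but that is an assertion you would have to substantiate, not assume.
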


\begin{proof}
The presentation of the Koszul dual operad is obtained by a simple direct computation. To establish that our operad is Koszul, we shall argue as follows. First, the presentation of the Koszul dual operad implies that we have
\begin{gather*}
\dim\PLMC^!(\{1,\ldots,n\},\varnothing;\o)=n, n\ge 1,\\
\dim\PLMC^!(\{1,\ldots,n\},\{a\};\m)=1, n\ge 0,\\
\dim\PLMC^!(\{1,\ldots,n\},\varnothing;\m)=1, n\ge 1,
\end{gather*} 
since the $\o$-coloured part is the operad $\Perm=\PL^!$, and the other components are at most one-dimensional due to the relations that are satisfied, and at the same time at least one-dimensional since operad all our relations are differences of monomials in the free coloured operad, and the same is true for every pre- and post-composition of those relations with monomials, so no monomial vanishes individually in the quotient. 

We finish the argument using Gr\"obner bases for coloured operads \cite{MR4375008}. We shall work with the generators 
 \[
[a_1,a_2],\quad  a_1\cdot a_2=a_1\triangleleft a_2+a_2\triangleleft a_1,\quad a_1\blacktriangleleft a_2, \quad \phi(a_1)
 \] 
of the coloured operad $\PLMC$. For a suitable order of coloured shuffle tree monomials in the corresponding free coloured operad, the monomials representing the terms 
 \[
[a_1,[a_2,a_3]],\quad a_1\cdot[a_2,a_3],\quad a_1\cdot(a_2\cdot a_3),\quad
a_1\blacktriangleleft [a_2,a_3],\quad
\phi([a_1,a_2])
 \]
are the leading terms of the relations of the coloured operad $\PLMC$; for this, it is useful to recall that the defining relations of the operad $\PL$ in terms of the operations $[a_1,a_2]$ and $a_1\cdot a_2$ are the Jacobi identity and
\begin{multline*}\label{eq:relPL}
(a_1\cdot a_2)\cdot a_3 - a_1\cdot (a_2 \cdot a_3) - a_1\cdot [a_2, a_3] - [a_1, a_2]\cdot a_3 \\- 2[a_1, a_3]\cdot a_2+
 [a_1, a_2\cdot a_3] + [a_1\cdot a_2, a_3] + [[a_1, a_3], a_2] = 0 ,
\end{multline*}
see \cite[Lemma~2]{MR3203367}. Moreover, according to \cite[Prop.~4]{MR3203367}, these monomials are precisely the quadratic normal forms for the coloured operad $\PLMC^!$ with respect to the opposite order. Monomials whose quadratic divisors are these normal forms give an upper bound on the dimensions of components of the operad $\PLMC^!$, and that bound is sharp if and only if our quadratic relations form a Gr\"obner basis. Since we manifestly obtain the correct dimensions for the components $\PLMC^!(\{1,\ldots,n\},\{a\};\m)$ and $\PLMC^!(\{1,\ldots,n\},\varnothing;\m)$, the operad $\PLMC^!$ has a quadratic Gr\"obner basis and is therefore Koszul.
\end{proof}

\begin{remark}
There are several different choices of a Gr\"obner basis that can be used in the second half of the proof; it is not even necessary to change to the generators $[a_1,a_2]$ and $a_1\cdot a_2$. The advantage of this choice is that it at the same time proves, using \cite[Th.~4]{MR3203367}, that $\PL$ is free as a left $\Lie$-module (so that free Lie algebras are free as Lie algebras), and, moreover, it can be used to re-prove Theorem \ref{th:div0} without the explicit description of Proposition \ref{prop:combPLMC}, just by appealing to normal forms with respect to a Gr\"obner basis.
\end{remark}

The following result, and especially the ending of its proof, leads to an educated guess of the exact statement of Proposition \ref{prop:combPLMC}.

\begin{corollary}\label{cor:dimPLMC}
We have
\begin{gather*}
\dim\PLMC(\{1,\ldots,n\},\varnothing;\o)=n^{n-1}, n\ge 1,\\
\dim\PLMC(\{1,\ldots,n\},\{a\};\m)=(n+1)^{n-1}, n\ge 0,\\
\dim\PLMC(\{1,\ldots,n\},\varnothing;\m)=(n+1)^{n-1}, n\ge 1.
\end{gather*} 
\end{corollary}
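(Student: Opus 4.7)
The plan is to deduce these dimensions from the Koszulness of $\PLMC$ proven in the preceding proposition, via the Ginzburg--Kapranov type functional equation relating a Koszul operad to its Koszul dual. First, I would write down the bivariate exponential generating series of $\PLMC^!$, read directly off the presentation established above:
\[
f_\o^!(x,y)=xe^x,\qquad f_\m^!(x,y)=(1+y)e^x-1.
\]
The first is the Poincar\'e series of $\Perm$ (with the identity), while the second packages the $1$-dimensional $\m$-output components coming from $\bullet$-chains above either the $\m$-identity or $\delta(-)$.

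Next, I would substitute these into the coloured Koszul functional equation. After fixing the correct sign conventions, this amounts to finding a substitution $(X,Y)$ such that $f_\o(X,Y)=x$ and $f_\m(X,Y)=y$; the correct substitution turns out to be $X=xe^{-x}$ and $Y=(1+y)e^{-x}-1$. The equation for $f_\o$ is then the classical Cayley identity $T(xe^{-x})=x$ for the tree function $T(x)=\sum_{n\ge1}n^{n-1}x^n/n!$ (characterised by $T=xe^T$), giving $f_\o(x,y)=T(x)$ and hence $\dim\PLMC(\{1,\ldots,n\},\varnothing;\o)=n^{n-1}$. Inverting the other relation yields $1+y=(1+Y)e^x=(1+Y)T(X)/X$, whence
\[
f_\m(x,y)=(1+y)\,T(x)/x-1.
\]
Using Lagrange inversion one has $T(x)/x=\sum_{n\ge0}(n+1)^{n-1}x^n/n!$, the exponential generating series of rooted forests on $n$ vertices; reading off the coefficients of $x^n y^0$ and $x^n y^1$ then delivers $\dim\PLMC(\{1,\ldots,n\},\{a\};\m)=\dim\PLMC(\{1,\ldots,n\},\varnothing;\m)=(n+1)^{n-1}$, with the $y^0$-coefficient correctly vanishing at $n=0$.

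The main obstacle is pinning down the correct sign conventions in the coloured Ginzburg--Kapranov equation. The textbook version is stated for binary-generated operads, whereas our situation involves the unary mixed-colour generator $\phi$ (and its dual $\delta$), which forces a sign correction on the $\m$-variable that a naive transcription of the single-colour formula would miss. This subtlety is settled by a low-arity sanity check, for instance matching $\dim\PLMC(\{1\},\varnothing;\m)=1$, the single copy of $\phi$. As a pleasant bonus, the emergence of the rooted-forest series $T(x)/x$ inside $f_\m$ is precisely the clue that motivates the exact statement of Proposition~\ref{prop:combPLMC}: by Poincar\'e--Birkhoff--Witt one has $U_{\Lie}(\RT)\cong S(\RT)$ as species, whose $n$-th component is the $(n+1)^{n-1}$-dimensional space of rooted forests on $n$ vertices.
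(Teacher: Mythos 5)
Your proposal is correct and follows essentially the same route as the paper: read off the Hilbert--Poincar\'e series of $\PLMC^!$ from the presentation, apply the Koszul functional equation (the paper writes it as $f^{\o}e^{-f^{\o}}=x$, $f^{\m}e^{-f^{\o}}+e^{-f^{\o}}-1=y$, equivalent to your inverse substitution), and identify $e^{f_{\RT}(x)}=T(x)/x$ with the rooted-forest series $\sum_{n\ge0}(n+1)^{n-1}x^n/n!$. Your closing remark that this computation motivates the exact statement of Proposition~\ref{prop:combPLMC} via PBW is also precisely the point the paper makes.
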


\begin{proof}
The previous proof implies the following Hilbert--Poincaré series of the Koszul dual operad:
 \[
f^{\o}_{\PLMC^!}(x,y)=xe^x,\quad f^{\m}_{\PLMC^!_\m}(x,y)=ye^x+e^x-1. 
 \]
Since the operad $\PLMC$ is Koszul, the functional equation for the Hilbert--Poincaré series \cite[Th.~3.3.2]{MR1301191}  tells us that
\begin{gather*}
f^{\o}_{\PLMC}(x,y)e^{-f^{\o}_{\PLMC}(x,y)}=x,\\ 
f^{\m}_{\PLMC}(x,y)e^{-f^{\o}_{\PLMC}(x,y)}+e^{-f^{\o}_{\PLMC}(x,y)}-1=y,
\end{gather*}
which of course recovers the fact that $f^{\o}_{\PLMC}(x,y)=f_{\RT}(x)$ that we already know, and 
 \[
f^{\m}_{\PLMC}(x,y)=ye^{f_{\RT}(x)}+e^{f_{\RT}(x)}-1,
 \]
instantly proving the necessary result, if one notes that the exponential of the generating function for labelled rooted trees is the generating function for forests of labelled rooted trees
 \[
\sum_{n\ge 0} \frac{(n+1)^{n-1}}{n!}x^n.
 \] 
\end{proof}

\end{document}